\numberwithin{equation}{section}
\def \er{\varepsilon}
\def \ep{\varepsilon}
\def \F {\mathcal F}
\def \G {\mathcal G}
\renewcommand{\mod}{\,\mathrm{mod}\,}
\def \C{\mathbb{C}}
\def \M2{\mathrm{M}_2}
\def \R{\mathbb{R}}
\def \Z{\mathbb{Z}}
\def \T{\mathbb{T}}
\def \sl2r{\mathrm{SL}(2,\R)}
\newcommand{\beq}{\begin{equation}}
\newcommand{\eeq}{\end{equation}}
\def\sign{\operatorname{sign}}
\def\im{\operatorname{Im}}
\def\re{\operatorname{Re}}
\newcommand{\eqdef}{\stackrel{\rm def}{=\kern-3.6pt=}}
    \newcommand{\<}{\langle}
\renewcommand{\>}{\rangle}
\theoremstyle{plain}
\newtheorem{theorem}{\bf Theorem}[section]
\newtheorem{lemma}[theorem]{\bf Lemma}
\newtheorem{prop}[theorem]{\bf Proposition}
\newtheorem{cor}[theorem]{\bf Corollary}
\theoremstyle{definition}
\theoremstyle{remark}
\newtheorem{remark}[theorem]{\bf Remark}
\theoremstyle{cond}
\renewcommand{\le}{\leqslant}
\renewcommand{\ge}{\geqslant}
\renewcommand{\qed}{\vrule height7pt width5pt depth0pt}
\title{On spectral bands of discrete periodic operators}
\author[N. Filonov]{Nikolay Filonov}
\address{St.~Petersburg Department of V.~A.~Steklov Mathematical Institute,
	Fontanka 27,
 St.Petersburg, 191023, Russia, and
	St. Petersburg State University,
Universitetskaya emb. 7/9, St. Petersburg, 199034,
Russia}
\thanks{The first author was supported by RSF 22-11-00092.}
\author[I. Kachkovskiy]{Ilya Kachkovskiy}
\address{Department of Mathematics,
	Michigan State University,
	Wells Hall, 619 Red Cedar Road,
	East Lansing, MI, 48910,
	United States of America}
\thanks{The second author was partially supported by the National Science Foundation DMS--1846114 grant. 
The second author's research visits to St. Petersburg were partially supported by the RSF 17-11-01069 grant and the program "Spectral Theory and Mathematical Physics" at Euler Institute, St. Petersburg}
\begin{document}
\maketitle
\begin{abstract}
We consider discrete periodic operator on $\Z^d$ with respect to lattices $\Gamma\subset\Z^d$ of full rank. We describe the class of lattices $\Gamma$ for which the operator may have a spectral gap for arbitrarily small potentials. We also show that, for a large class of lattices, the dimensions of the level sets of spectral band functions at the band edges do not exceed $d-2$.

\smallskip
\noindent \textbf{Keywords:} periodic Schr\"odinger operator, discrete Schr\"odinger operator, tight binding approximation, Bethe--Sommerfeld problem, Bloch eigenvalues, spectral band edges.
\end{abstract}
\section{Definitions and results}
\subsection{Lattices in $\Z^d$} Let $\Gamma\subset\Z^d$ be a lattice of full rank with a basis $a_1,\ldots,a_d\in \Z^d$ (linearly independent over $\R$):
$$
\Gamma=\{n_1 a_1+\ldots+n_d a_d\colon n_1,\ldots,n_d\in \Z\}.
$$
We will consider discrete Schr\"odinger operators on $\ell^2(\Z^d)$ with potentials $V$ periodic with respect to $\Gamma$:
\beq
\label{eq_h_def}
(H\psi)(n)=(\Delta\psi)(n)+V(n)\psi(n)=\sum_{m\colon |m-n|_{1}=1}\psi(m)+V(n)\psi(n),
\eeq
where $|n|_1=|n_1|+\ldots+|n_d|$ denotes the $\ell^1$ norm, and the potential $V$ is $\Gamma$-periodic:
$$
V(n+a)=V(n),\quad \forall a\in \Gamma.
$$
If $a_1,\ldots,a_d$ is a basis of a lattice $\Gamma$, its dual basis $b_1,\ldots,b_d$ is uniquely determined by the relations
$$
\<a_i,b_j\>=\delta_{ij},
$$
where $\delta_{ij}$ is the Kronecker symbol and $\<\cdot,\cdot\>$ is the standard inner product on $\R^d$. The {\it dual lattice} $\Gamma'$ (also called the {\it reciprocal lattice} in literature) is defined by
$$
\Gamma'=\{n_1 b_1+\ldots+n_d b_d\colon n_1,\ldots,n_d\in \Z^d\}\subset \R^d.
$$
One can easily check that $\Gamma'$ as a lattice does not depend on the particular choice of a basis $a_1,\ldots,a_d$ of $\Gamma$. Alternatively, the following is an equivalent coordinate-free definition:
$$
\Gamma'=\{b\in \R^d\colon e^{2\pi i \<b,a\>}=1,\,\forall a\in\Gamma\}.
$$
One can view $\Gamma'$ as the collection of all rational linear relations (modulo $\Z^d$) between the coordinates of vectors from $\Gamma$. In particular, since $\Gamma\subset\Z^d$, we have $\Gamma'\supset\Z^d$.

Some of our results will assume that $\Gamma$ is not of a certain type. We will say that $\Gamma$ is an {\it even lattice} if $(1/2,\ldots,1/2)\in \Gamma'$. In other words, $\Gamma$ is even if and only if for any $n=(n_1,\ldots,n_d)\in \Gamma$ we have $n_1+\ldots+n_d\in 2\Z$. A lattice $\Gamma$ is even if and only if the potential $V(n)=v(n_1+\ldots+n_d)$ is $\Gamma$-periodic for any $2$-periodic function $v$ on $\mathbb Z$. Such potentials will be called {\it checkerboard potentials}.

We will also say that $\Gamma$ is {\it divisible} if the following is true: for any $(\ep_1,\ldots,\ep_d)\in \{-1,1\}^{d}$ and any $j=1,2,\ldots,d$, there exists an integer $p_j\ge 2$ such that 
$$
\frac{1}{p_j}(\ep_1,\ep_2,\ldots\ep_{j-1}, \ep_j,\ep_{j+1},\ldots,\ep_d)\in \Gamma'\,\text{ or }\,\frac{1}{p_j}(\ep_1,\ep_2,\ldots\ep_{j-1},-\ep_j,\ep_{j+1},\ldots,\ep_d)\in \Gamma'.
$$
Note that if we have $p_j=2$ for at least one $j$, then the lattice will become even.

For $d=2$, $\Gamma$ is divisible if and only if $\Gamma'$ contains a vector $(1/p,1/p)$ or $(1/p,-1/p)$ for some integer $p\ge 2$. The second case can be reduced to the first case by reflection, and we can assume without loss of generality that $(1/p,1/p)\in \Gamma'$. In this case, $\Gamma$ is divisible if and only if the potential $V(n)=v(n_1+n_2)$ is $\Gamma$-periodic for any $p$-periodic function $v$ on $\mathbb Z$. Such potentials will be called $p$-{\it periodic potentials}. If $p=2$, this condition defines a checkerboard potential.

\subsection{Bloch wave expansion.} Let $V\colon \Z^d\to \R$ be a $\Gamma$-periodic potential. Denote by $\ell^2_{\theta}(\Z^d/\Gamma)$ the space of all complex-valued functions $\psi\colon \Z^d\to \C$ with {\it quasimomentum }$\theta$. By definition, the latter means
$$
\psi(n+a)=e^{2\pi i \<\theta,a\>}\psi(n),\quad \forall n\in \Z^d,\,\,a\in\Gamma.
$$
Clearly, if $\psi$ has quasimomentum $\theta$, then it also has quasimomentum $\theta+b$ for any $b\in \Gamma'$. Therefore, it is natural to consider the quasimomentum as an element of $\R^d/\Gamma'$ or, equivalently, an element of a representative set of $\R^d/\Gamma'$ such as the elementary cell
$$
\Omega'=\{\eta_1 b_1+\ldots+\eta_d b_d\colon \eta_1,\ldots,\eta_d\in [0,1)\}.
$$
Any function $\psi$ with quasimomentum $\theta$ is uniquely determined by its values on any representative set of $\Z^d/\Gamma$: for example, on
$$
\Lambda=\{x_1 a_1+\ldots+x_d a_d\colon x_1,\ldots,x_d\in [0,1)\}\cap\Z^d.
$$
In other words, we can identify the space $\ell^2_{\theta}(\Z^d/\Gamma)$ with $\ell^2(\Lambda)$, thus making it a finite-dimensional Hilbert space. With this identification, the classical Floquet--Bloch--Gelfand theorem states that the operator $H$ is unitarily equivalent to a direct integral:
\beq
\label{eq_dirint}
H\cong\int_{\Omega'}\oplus h(\theta)\,d\theta,
\eeq
where $h(\theta)$ is the ``restriction'' of the original operator $H$ to the space $\ell^2_{\theta}(\Z^d/\Gamma)$. By ``restriction'' we mean an operator defined by the same formula \eqref{eq_h_def}: 
$$
(H\psi)(n)=((\Delta(\theta)+V)\psi)(n)=(\Delta\psi)(n)+(V\psi)(n).
$$
Note that $\ell^2_{\theta}(\Z^d/\Gamma)$ is not a subspace of $\ell^2(\Z^d)$. To emphasize this, we use the notation $\Delta(\theta)$ for the (formal) action of the operator $\Delta$ on $\ell^2_{\theta}(\Z^d/\Gamma)$. 

Let 
$$
N=\#\Lambda=\# \Z^d/\Gamma=\dim \ell^2_{\theta}(\Z^d/\Gamma).
$$ 
Since $h(\theta)$ is a self-adjoint operator on $\ell^2_{\theta}(\Z^d/\Gamma)$, one can numerate its eigenvalues in the non-decreasing order:
$$
E_1(\theta)\le E_2(\theta)\le\ldots\le E_N(\theta).
$$
Each function $E_j(\cdot)$ is a real-valued Lipschitz and piecewise real analytic function of $\theta$. Since $\Omega'$ can be identified with the torus $\R^d/\Gamma'$, each function $E_j(\theta)$ can be extended into $\R^d$ by $\Gamma'$-periodicity. These functions are usually called the {\it spectral band functions} of the $\Gamma$-periodic operator $H$. Denote by
$$
[E_j^-,E_j^+]=\{E_j(\theta)\colon \theta\in \Omega'\}
$$
the range of $E_j$. The interval $[E_j^-,E_j^+]$ is called {\it the $j$th spectral band of the operator $H$}. It is well known that the spectrum of $H$ is the union of the spectral bands:
$$
\sigma(H)=\cup_{j=1}^N [E_j^-,E_j^+].
$$
We should note that the number of bands not only depends on the operator $H$ per se, but also on the choice of the lattice $\Gamma$ (since one can always take a sub-lattice of periods $\Gamma_1\subset\Gamma$ thus increasing the size of the elementary cell); however, it is not hard to re-calculate the band structure using a new lattice, and our results will not depend on this ambiguity. An example of such calculation will be later done for the free operator $H=\Delta$.

We refer to \cite{Ku_book,Ku_review,RS4} for more detailed overview and introduction to spectral theory of Schr\"odinger operators with periodic potentials, and to \cite[Chapter 10]{AMe} for some physical foundations of the tight binding approximation that leads to discrete Schr\"odinger operators.

\subsection{Main results} In this paper, we will only consider the case $d\ge 2$. Among some natural questions arising in the theory of periodic operators, we will single out the following.
\begin{enumerate}
	\item Can we show that there are no flat bands? In other words, is it true that $E_j^-<E_j^+$ for all $j$?
	\item Do the bands overlap? Can we show that the spectrum of $H$ is an interval for $\|V\|_{\infty}\ll 1$?
	\item What is the structure of the level sets $\{\theta\in \Omega'\colon E_j(\theta)=E_j^{\pm}\}$?
\end{enumerate}
Question (1) is equivalent to the question of absolute continuity of the spectrum of $H$ and is probably a folklore result for any lattice $\Gamma$. See, for example, \cite{Kruger} for the case of a rectangular lattice. It will also easily follow from the arguments of the present paper (in particular, Proposition \ref{prop_ac}).

Question (2) is often referred to as the discrete Bethe--Sommerfeld Conjecture. The original Bethe--Sommerfeld conjecture was established in the continuum in \cite{Parnovski1}, see also \cite{Parnovski2,Skriganov}, and states that the spectrum of a Schr\"odinger operator in the continuum has only finitely many gaps if $d\ge 2$; or, equivalently, that the spectrum does not have gaps located at sufficiently large energies. In the discrete setting, the kinetic energy operator $\Delta$ is bounded, and one can imitate large kinetic energy regime by considering small potentials, which leads to Question (2). For rectangular lattices, the corresponding result was proved in \cite{Han} for small potentials, see also \cite{Kruger,Fillman}, assuming that $\Gamma$ has at least one odd period. As it turns out, a checkerboard potential is a counterexample which applies to the case when all periods of $\Gamma$ are even: the operator $\Delta+\er V$ has a gap in the middle of the spectrum for all $\er>0$.

Until recently, Question (3) has only been addressed in the continuous setting. In \cite{FK_acta}, it was shown that, for Schr\"odinger operators on $\mathbb R^2$, 
 the level sets mentioned in (3) are finite. Numerous results were earlier established for the ground state, see for example, \cite{KSi,BSu03}. See also some related results in \cite{V,KL,Sht2,Sht3}. In the tight binding setting, the equivalent of \cite{FK_acta} is false with the same counterexample as (2): the checkerboard potential. Some positive results (both in the discrete and continuous setting for $d=2$) have been obtained in \cite{PS}: it is shown that one can construct a sequence of small perturbations of larger and larger periods to lower the degeneracy of the band edge and ultimately thansform the band function into a Morse function (however, one cannot treat all bands simultaneously by this method, as their number grows each time a perturbation is applied). One should note that (3) is a part of the ``effective mass conjecture'' which states that for ``generic'' potentials all band functions behave like Morse functions around their global minima and maxima. Also, a proper analogue of this conjecture needs to be formulated in view of the known counterexamples. 
  
 The following are main results of the present paper.
\begin{theorem}
\label{th_main_edges}
Suppose that $\Gamma\subset \Z^d$ is not divisible. Then each set $\{\theta\in \Omega'\colon E_j(\theta)=E_j^{\pm}\}$ has dimension at most $d-2$.
\end{theorem}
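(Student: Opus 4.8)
The plan is to pass to a convenient ``tight-binding'' gauge, reduce the statement to a rank estimate for the Hessian of a band function at a band edge, and then convert the degeneracy of that Hessian into the arithmetic condition defining divisibility. First I would strip the quasimomentum off the hopping terms and put it into global phases: writing $\psi(n)=e^{2\pi i\<\theta,n\>}\wt\psi(n)$ with $\wt\psi$ genuinely $\Gamma$-periodic identifies $\ell^2_\theta(\Z^d/\Gamma)$ with $\ell^2(\Z^d/\Gamma)$ and conjugates $h(\theta)$ to $\wt h(\theta)=\sum_{k=1}^d\bigl(w_kT_k+\overline{w_k}\,T_k^*\bigr)+V$, where $w_k=e^{2\pi i\theta_k}$, each $T_k$ is the (permutation-unitary) shift by $e_k$ on the finite abelian group $\Z^d/\Gamma$, and $V$ is the diagonal potential. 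A direct check with the character unitary $\delta_x\mapsto e^{2\pi i\<b,x\>}\delta_x$ shows the eigenvalues of $\wt h(\theta)$ are $\Gamma'$-periodic, and since $\R^d/\Z^d\to\R^d/\Gamma'$ is a local diffeomorphism it suffices to bound level-set dimensions for the $N=\#(\Z^d/\Gamma)$ eigenvalue functions of $\wt h(\theta)$ over $\theta\in\R^d/\Z^d$. The decisive structural feature of this representation is that each $\theta_k$ enters only the single summand $w_kT_k+\overline{w_k}\,T_k^*$, so $\partial_{\theta_k}\partial_{\theta_l}\wt h=0$ for $k\ne l$; moreover, for a simple eigenvalue $E$ with unit eigenvector $\psi$, the Hellmann--Feynman formula reads $\partial_{\theta_k}E=-4\pi\,\im\bigl(w_k\<\psi,T_k\psi\>\bigr)$.

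Next I would argue by contradiction. Suppose some level set $L=\{\theta\colon E_j(\theta)=E_j^+\}$ (say at a maximum) has dimension at least $d-1$. Passing to the top-dimensional stratum of $L$ and to the dense subset where $E_j$ is a simple eigenvalue, the level set contains an analytic $(d-1)$-dimensional submanifold $M$ on which $E_j$ is analytic and constant. Because $E_j\le E_j^+$ everywhere with equality on $M$, the function $E_j$ attains a global maximum at each point of $M$, so $\nabla E_j\equiv 0$ on $M$ and the Hessian $D^2E_j$ is negative semidefinite there. Differentiating the relation $E_j\equiv E_j^+$ twice along $M$ and using $\nabla E_j=0$ shows that $D^2E_j$ vanishes on the $(d-1)$-dimensional tangent space $T M$; since the form is negative semidefinite, $T M$ lies in its kernel, whence $\rank D^2E_j\le 1$. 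Thus the whole problem is reduced to proving: \emph{if $\Gamma$ is not divisible, the Hessian of any band function at a band edge has rank at least $2$.}

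For the core step I would compute the Hessian by second-order perturbation theory, using that the mixed second derivatives of $\wt h$ vanish. At a band edge $\partial_{\theta_k}E=0$ gives $w_k\<\psi,T_k\psi\>\in\R$ and $(\partial_{\theta_k}\wt h)\psi\perp\psi$, so with $\Phi_\xi=\sum_k\xi_k(\partial_{\theta_k}\wt h)\psi=2\pi i\sum_k\xi_k\bigl(w_kT_k-\overline{w_k}\,T_k^*\bigr)\psi$ and the reduced resolvent $R=(\wt h-E)^{-1}\big|_{\psi^\perp}$ one gets a quadratic form $D^2E_j(\xi)=-8\pi^2\sum_k\xi_k^2\,w_k\<\psi,T_k\psi\>-2\<\Phi_\xi,R\,\Phi_\xi\>$. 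Rank at most $1$ then forces a rigid degeneracy: the hopping vectors $\bigl(w_kT_k-\overline{w_k}\,T_k^*\bigr)\psi$ must be confined, modulo $\psi$, to a one-dimensional space, while the real numbers $w_k\<\psi,T_k\psi\>$ must be tied together compatibly. I expect this rigidity to mean precisely that $\psi$ is a simultaneous ``clock'' vector for all the $T_k$: there is an integer $p\ge2$, a sign pattern $(\ep_1,\dots,\ep_d)\in\{-1,1\}^d$, and phases so that $T_k$ acts on the relevant structure through $e^{2\pi i\ep_k/p}$, which is exactly the statement that $b=\tfrac1p(\ep_1,\dots,\ep_d)\in\Gamma'$, i.e. $\Gamma$ is divisible. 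The prototype is the checkerboard ($p=2$): there $\psi$ sits on a single sublattice, every $T_k\psi$ lands in the one orthogonal line, all $(\partial_{\theta_k}\wt h)\psi$ are parallel, and the Hessian is the rank-one outer square of $\nabla(2\sum_k\cos2\pi\theta_k)$ along the hypersurface $\{\sum_k\cos2\pi\theta_k=0\}$.

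The hard part will be this last conversion of the analytic/linear-algebraic degeneracy into the number-theoretic divisibility relation, including producing the correct modulus $p\ge2$ and accommodating either sign of $\ep_j$ as in the definition. A second, more technical obstacle is that band edges may occur at \emph{degenerate} eigenvalues, where $E_j$ is not smooth and the single Hessian above is undefined; I would handle this by restricting to the dense simple-eigenvalue locus of the top stratum of $L$ and estimating the dimension of the exceptional (discriminant) set separately, or, when the edge is genuinely degenerate along $M$, by replacing $\wt h$ with the effective Hamiltonian on the edge eigenspace and running the same rank-$\ge2$ argument for this matrix-valued perturbation problem. Combining the rank-$\ge2$ conclusion with the reduction of the second paragraph contradicts $\dim L\ge d-1$, so every band-edge level set has dimension at most $d-2$.
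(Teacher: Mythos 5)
There is a genuine gap, and it sits exactly where you flag it yourself: the conversion of ``Hessian rank $\le 1$ at a band edge'' into divisibility of $\Gamma$ is the entire content of the theorem, and your proposal offers no argument for it beyond ``I expect this rigidity to mean precisely that $\psi$ is a simultaneous clock vector.'' Worse, the intermediate deduction you sketch is not valid: from your own formula $D^2E_j(\xi)=-8\pi^2\sum_k\xi_k^2\,w_k\<\psi,T_k\psi\>-2\<\Phi_\xi,R\,\Phi_\xi\>$, rank $\le 1$ of the total form does \emph{not} force the vectors $(w_kT_k-\overline{w_k}T_k^*)\psi$ into a one-dimensional space modulo $\psi$, because at any interior band edge the reduced resolvent $R$ is indefinite (there are bands both above and below $E_j^{\pm}$), so the two terms can cancel without any geometric confinement of the $\Phi_\xi$. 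Moreover, the core lemma you propose --- non-divisible implies $\rank D^2E_j\ge 2$ at every band edge --- is strictly stronger than the theorem: for $d=2$ it would say every edge of every band is a nondegenerate (Morse) extremum, i.e.\ it would settle a pointwise form of the effective mass conjecture, which the paper explicitly describes as open and which \cite{PS} only achieves after perturbing the potential by small terms of growing period. The theorem bounds the \emph{dimension of the level set}, which is compatible with isolated edge points having Hessian of rank $0$ or $1$; what your reduction actually requires is the weaker implication ``rank $\le 1$ along a $(d-1)$-dimensional analytic subset of the edge level set implies divisibility,'' and no mechanism for extracting the integer $p$ and the sign pattern from that hypothesis is given. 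Finally, the degenerate-eigenvalue issue is real, not cosmetic: the level set can consist entirely of points where $E_j$ is a crossing eigenvalue, where $E_j$ is merely Lipschitz, the Hessian is undefined, and the effective-Hamiltonian replacement you mention would need its own rank argument that you do not supply.

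For contrast, the paper's proof avoids eigenvector perturbation theory and Hessians altogether. It works with the characteristic determinant $\det(h(\theta)-E)$: a band edge forces a root of multiplicity $\ge 2$ in the variable $\theta_1$ (Proposition \ref{prop_degenerate_root}); one then complexifies the remaining variables along $\theta'\mapsto\theta'-itu$ with $u\in\{-1,1\}^{d-1}$ and uses the large-$t$ asymptotics of $F$ to separate the diagonal entries $2\cos2\pi(\theta_1+b_1)+F(\theta'+b'-itu)$ for distinct $b\in\Lambda'$ (Lemma \ref{lemma_separation}) --- and it is exactly here, not in any spectral rigidity, that non-divisibility enters: the only way separation can fail identically in $\theta'$ is the translational identity forcing $b_1(-\ep,\ep_2,\ldots,\ep_d)\in\Gamma'$, i.e.\ divisibility. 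A Rouch\'e argument (Corollary \ref{cor_main}) then shows all roots are simple for large $t$, the potential entering only as a bounded perturbation, and analyticity of the discriminant (Proposition \ref{prop_discriminant}) plus a fibration of the projected level set along lines parallel to $u$ yields $\dim\le d-2$. This route trades all pointwise curvature information (which it neither uses nor produces) for a global complex-analytic dichotomy, which is why it can handle degenerate eigenvalues and degenerate edges uniformly, precisely the situations where your Hessian-based plan breaks down.
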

\begin{remark}
It is easy to see that, in appropriately chosen coordinates, the above set is algebraic. In particular, it can be represented as a finite union of smooth submanifolds of $\mathbb T^d$, and therefore any reasonable notion of dimension can be used.
\end{remark}
\begin{theorem}
\label{th_main_bz}
Suppose that $\Gamma\subset \Z^d$ is not even. Then there exists $\er=\er(\Gamma)>0$ such that, for any $\Gamma$-periodic potential $V$ with $\|V\|_{\infty}<\er$, the spectrum of $H$ is an interval.
\end{theorem}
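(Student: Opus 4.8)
The plan is to reduce Theorem~\ref{th_main_bz} to a statement about the free operator $H=\Delta$ and then to analyse the level sets of its dispersion relation. Since $V$ is a multiplication operator, $\|V\|_\infty$ bounds its operator norm, so by the variational principle $|E_j(\theta)-E_j^{(0)}(\theta)|\le\|V\|_\infty$ for all $j,\theta$, where $E_j^{(0)}$ denote the band functions of $\Delta$; consequently the edges $E_j^\pm$ move by at most $\|V\|_\infty$ when the potential is switched on. As $\sigma(H)=\cup_{j}[E_j^-,E_j^+]$ with both $E_j^-$ and $E_j^+$ nondecreasing in $j$, this set is an interval if and only if $E_m^+\ge E_{m+1}^-$ for every $m$. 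It therefore suffices to show that the free bands overlap strictly, i.e.
\[
\delta(\Gamma):=\min_{1\le m<N}\Bigl(\max_\theta E_m^{(0)}(\theta)-\min_\theta E_{m+1}^{(0)}(\theta)\Bigr)>0,
\]
after which one takes $\er(\Gamma)=\delta(\Gamma)/2$.

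Second, I would recast the overlap as a property of level sets. Write $\mathcal E(\xi)=2\sum_{i=1}^d\cos(2\pi\xi_i)$ for the dispersion relation of $\Delta$ on $\Z^d$, and set $G=\Gamma'/\Z^d\subset\T^d$, a finite subgroup of order $N$ (recall $\Gamma\subset\Z^d\subset\Gamma'$). Diagonalising the free fibre operator shows that its eigenvalues are exactly $\{\mathcal E(\theta+g):g\in G\}$. Since $\mathcal E$ is continuous with range $[-2d,2d]$, the free spectrum has no gaps, so a failure of strict overlap between consecutive bands can only be a \emph{touching} $\max_\theta E_m^{(0)}=\min_\theta E_{m+1}^{(0)}=\lambda_0$. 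A touching at $\lambda_0$ is equivalent to the counting function
\[
F_\lambda(\xi):=\#\{g\in G:\mathcal E(\xi+g)<\lambda\}
\]
being almost everywhere equal to the constant $m$ at $\lambda=\lambda_0$. Thus the whole theorem comes down to proving that, for a non-even lattice, $F_\lambda$ is non-constant for every $\lambda\in(-2d,2d)$.

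For $\lambda\neq0$ this holds for \emph{every} lattice: the $N$ hypersurfaces $\{\mathcal E(\cdot+g)=\lambda\}=\{\mathcal E=\lambda\}-g$ ($g\in G$) are pairwise distinct, so $F_\lambda$ jumps by $\pm1$ across the generic point of each of them and cannot be constant. Distinctness follows because a nonzero $v\in G$ fixing the level set $\{\mathcal E=\lambda\}$ would have to preserve $U_\lambda:=\{\mathcal E<\lambda\}$ (it cannot exchange $U_\lambda$ with $\{\mathcal E>\lambda\}$, since $|U_\lambda|\neq\tfrac12$ for $\lambda\neq0$ by the symmetry $\mathcal E(\xi+\tau)=-\mathcal E(\xi)$, $\tau=(\tfrac12,\dots,\tfrac12)$); then $\widehat{\one_{U_\lambda}}(e_i)\,(e^{-2\pi i v_i}-1)=0$ for each $i$, and a one-variable computation gives $\widehat{\one_{U_\lambda}}(e_i)\neq0$, forcing $v\in\Z^d$, i.e. $v=0$ in $\T^d$. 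The only remaining value is $\lambda=0$, where the evenness hypothesis enters. Expanding on $\T^d$ one has $F_0=\one_{U_0}*\nu$ with $\nu=\sum_{g\in G}\delta_g$ and $\widehat\nu(k)=N\,\one_\Gamma(k)$ for $k\in\Z^d$ (because $k$ annihilates $G$ exactly when $k\in\Gamma''=\Gamma$); hence $F_0$ is constant iff $\widehat{\one_{U_0}}(k)=0$ for all $k\in\Gamma\setminus\{0\}$. The relation $\one_{U_0}(\,\cdot-\tau)=1-\one_{U_0}$ almost everywhere, a consequence of $\mathcal E(\xi+\tau)=-\mathcal E(\xi)$, forces $\widehat{\one_{U_0}}(k)=0$ whenever $k_1+\dots+k_d$ is even. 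For an even lattice every $k\in\Gamma$ has even coordinate sum, so $F_0$ is constant --- this is precisely the central touching that a checkerboard potential opens into a gap. For a non-even lattice there is $k\in\Gamma$ with $k_1+\dots+k_d$ odd, and everything reduces to the non-vanishing of $\widehat{\one_{U_0}}(k)$ for such $k$.

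This last non-vanishing is the step I expect to be the main obstacle. I would attack it by computing $\widehat{\one_{U_0}}(k)=\int_{U_0}\cos(2\pi\langle k,\xi\rangle)\,d\xi$ by integrating out the coordinates one at a time. In two dimensions the slicing produces the closed form $\widehat{\one_{U_0}}(k_1,k_2)=\dfrac{2(-1)^{k_1}}{\pi^2(k_1^2-k_2^2)}$ when $k_1+k_2$ is odd, which is never zero (note $|k_1|\neq|k_2|$ in that case); in higher dimensions the same one-variable integration, with the inner integral expressed through Chebyshev polynomials, should reduce the claim to the non-vanishing of a single explicit integral. Verifying that this integral does not vanish for every odd-sum frequency is the technical heart of the proof, but once it is in hand the chain above yields that $F_\lambda$ is non-constant for all $\lambda$, hence $\delta(\Gamma)>0$ and the theorem.
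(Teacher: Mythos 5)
Your overall architecture is sound and matches the paper's in its first half: reduce to the free operator (your quantitative min--max reduction via $\delta(\Gamma)>0$ is a legitimate variant of the paper's compactness argument, modulo the easy exclusion of touchings at $\pm 2d$ via the absence of flat bands), and then show that the counting function $\#\{g\in G\colon \mathcal E(\xi+g)<\lambda\}$ is non-constant for every $\lambda\in(-2d,2d)$ --- this is exactly the paper's \eqref{eq_count_def}. Where you diverge is the mechanism for non-constancy: the paper argues geometrically (connectivity of $\F_\lambda\setminus S_\lambda$, Lemma \ref{lemma_connected}; the barycenter symmetry Lemma \ref{lemma_symmetry} applied to two-dimensional slices in Theorem \ref{th_generic}; and a parity count, Proposition \ref{prop_2d_generic}, for $d=2$, $\lambda=0$), while you use Fourier analysis of the sublevel sets $U_\lambda$. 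For $d=2$ your treatment of $\lambda=0$ via the explicit coefficient $\widehat{\one_{U_0}}(k_1,k_2)=2(-1)^{k_1}/\bigl(\pi^2(k_1^2-k_2^2)\bigr)$ is correct and is a genuinely different (and rather elegant) replacement for Proposition \ref{prop_2d_generic}.

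However, there is a genuine gap, and it sits exactly where you flag it: for $d\ge 3$ your argument funnels the entire case $\lambda=0$ into the claim that $\widehat{\one_{U_0}}(k)\neq 0$ for \emph{every} $k\in\Z^d$ with odd coordinate sum, and you prove this only for $d=2$. This is not a routine verification: writing $\one_{\{t<0\}}$ via the sign function, the coefficient equals, up to an explicit nonzero constant, the Weber--Schafheitlin-type integral $\int_0^\infty J_{k_1}(s)\cdots J_{k_d}(s)\,s^{-1}\,ds$, and ruling out accidental zeros of such $d$-fold Bessel products for all odd-sum frequencies is an open-ended analytic problem --- nor does your reduction permit a retreat to ``some $k$ works,'' since for a given non-even $\Gamma$ the available odd-sum frequencies form a single coset of the even-sum sublattice and your argument gives no way to select a good representative. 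Notably, your own machinery contains a fix that bypasses this entirely: at $\lambda=0$, $d\ge3$, if some $v\in G\setminus\{0\}$ preserves $\F_0$, then (granting connectivity of $U_0$ and $O_0$) either $U_0+v=U_0$, and your one-variable computation $\widehat{\one_{U_0}}(e_i)\neq 0$ forces $v=0$; or $U_0+v=O_0=U_0+\tau$, whence $U_0+(v-\tau)=U_0$ and the same computation forces $v=\tau=(1/2,\ldots,1/2)$, excluded precisely by non-evenness. Two smaller omissions should also be repaired: (i) your passage from ``$v$ preserves the level set'' to ``$v$ preserves $U_\lambda$'' needs the connectivity of $U_\lambda$ and $O_\lambda$ (provable by pushing a coordinate to $0$ or $1/2$, in the spirit of Lemma \ref{lemma_connected}); the measure comparison alone does not exclude a partial swap of complementary components; (ii) ``pairwise distinct translates $\Rightarrow$ the count jumps by $\pm1$ at a generic point'' requires that distinct translates meet in a set of measure zero inside the hypersurface, which uses connectivity of $\F_\lambda\setminus S_\lambda$ and fails exactly at $d=2$, $\lambda=0$, where distinct translates share entire line components --- this is why that case must be, and in your plan correctly is, treated separately.
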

One can think that Theorem \ref{th_main_bz} is an incremental improvement over \cite{Han}. However, our proof is different from \cite{Han}, and some of the arguments are also used in Theorem \ref{th_main_edges}. One can also note that any lattice $\Gamma$ of rank $d$ contains a rectangular sub-lattice $\Gamma_1$ of rank $d$. Any $\Gamma$-periodic potential is also $\Gamma_1$-periodic, and therefore one may be tempted to simply apply the result of \cite{Han} to $\Gamma_1$. However, it may happen that all such $\Gamma_1$ are checkerboard lattices: for example, consider a non-checkerboard lattice $\Gamma\subset\Z^2$ with basis $\{(2,0),(1,2)\}$. Therefore, Theorem \ref{th_main_bz} does not follow from \cite{Han} directly.

The example of a checkerboard potential shows that one cannot extend Theorems \ref{th_main_edges} and \ref{th_main_bz} to the case of even lattices. For the case of Theorem \ref{th_main_bz}, it was observed in \cite{Han}, \cite{Fillman}. For the case of Theorem \ref{th_main_edges}, it was shown in \cite{FK_acta} (and probably known before) that checkerboard potentials in $d=2$ produce Fermi surfaces of dimensions $d-1$ at the edges of the spectral gap. This calculation can easily be extended to arbitrary dimension.

In the case $d=2$, Theorem \ref{th_main_edges} is sharp: the $p$-periodic potentials also produce one-dimensional Fermi surfaces at the edges of some bands. A simple argument has been provided in \cite{PS}.

The case of divisible lattices for $d\ge 3$ is currently open. It is easy to see that the ``minimal'' potential counterexample would be $\Gamma=(3\Z)^3$ whose fundamental domain has 27 points.

 Theorem \ref{th_main_edges} is strongly related to the question of irreducibility of the Fermi varieties. For discrete Schr\"odinger operators with rectangular lattices $\Gamma$ and coprime periods, irreducibility was shown in \cite{GKT} for $d=2$ and in \cite{Battig} for $d=3$. Recently, a breakthrough in that direction was obtained in \cite{Liu}, where irreducibility was established in all dimensions for rectangular lattices with coprime periods. For such lattices, Theorem \ref{th_main_edges} is a corollary of irreducibility. Our method of proof of Theorem \ref{th_main_edges} is substantially different and does not imply irreducibility.

\section{The real Fermi surface and the proof of Theorem \ref{th_main_bz}}
Most of the paper deals with the case of the free Laplacian; that is, the case $V\equiv 0$. Let
$$
F(\theta):=2\cos(2\pi \theta_1)+\ldots+2\cos(2\pi \theta_d),\quad \theta=(\theta_1,\ldots,\theta_d)\in \R^d.
$$
It is well known that the Laplace operator $\Delta$ on $\ell^2(\Z^d)$ is unitarily equivalent to an operator of multiplication by $F(\theta)$ on $L^2([0,1)^d)$. 
Denote by
$$
e_{\theta}(n)=e^{2\pi i\<n,\theta\>}.
$$
The above unitary equivalence can be formally written as
$$
(\Delta(\theta) e_{\theta})(n)=F(\theta)e_{\theta}(n),
$$
and therefore $e_{\theta}$ can be considered as a generalized eigenfunction of the absolutely continuous spectrum for $\Delta$. 
We will call $e_{\theta}$ the standard (generalized) eigenfunction of $\Delta$ with the momentum $\theta$. These eigenfunctions are parametrized by $\theta\in[0,1)^d$. We would like to relate it with the Floquet expansion \eqref{eq_dirint}. Let 
$$
\Lambda'=\Gamma'/\Z^d.
$$ 
We will use the following natural convention: if $b\in \Lambda'$, then any equality that involves $b$ or components of $b$, will be considered modulo $\Z^d$. For example, we will often use notation like $b=0$, or $b\in \Lambda'\setminus\{0\}$, meaning in the latter case $b\neq 0\,\,\mathrm{mod}\,\,\Z^d$. For the components of $b$ (say, $b_1=1/2$), the equalities are considered modulo $\Z$.

Note that $\#\Lambda'=\#\Lambda=N$, and there are exactly $N$ distinct standard eigenfunctions of $\Delta$ who have quasimomentum $\theta$. Moreover, one can easily check that these functions span the space $\ell^2_{\theta}(\R^d/\Gamma)$ of all functions with quasimomentum $\theta$:
$$
\ell^2_{\theta}(\Z^d/\Gamma)=\mathrm{span}\{e_{\theta+b}\colon b\in \Lambda'\}.
$$
It will be convenient for us to work in the above basis (which is orthogonal, and each element has norm $\sqrt{N}$). This basis also identifies $\ell^2_{\theta}(\Z^d/\Gamma)$ with $\ell^2(\Lambda')$.

The operator $h(\theta)=\Delta(\theta)+V$, therefore, is unitarily equivalent to the following operator:
\beq
\label{eq_hhat_def}
(\hat{h}(\theta)\varphi)(b)=F(\theta+b)\varphi(b)+\sum_{c\in \Gamma'}\hat{V}(b-c)\varphi(c).
\eeq
Here $\varphi\in \ell^2(\Lambda')$, and the difference $b-c$ is considered in $\Gamma'/\Z^d$ which is identified with $\Lambda'$. $\hat{V}$ is the discrete Fourier transform of $V$:
$$
\hat V(b)=\frac{1}{\sqrt{N}}\sum_{n\in\Lambda} V(n)e_b(n),\quad b\in \Lambda'.
$$
\subsection{The real Fermi surface} In this section, we will identify $[0,1)^d$ with the $d$-dimensional torus $\T^d$. Let $E\in \R$. Define
\beq
\label{eq_fermi_def}
\F_E=\{\theta\in \T^d\colon F(\theta)=E\}.
\eeq
The subset $\F_E$ is usually called the {\it real Fermi surface of the free Laplacian on $\Z^d$}. Denote also the (real) singular set
$$
S_E=\{\theta\in \F_E\colon (\nabla F)(\theta)=0\}=\{0,1/2\}^d\cap\F_E.
$$
Clearly, $S_E$ is non-empty only for $E\in\{-2d,-2(d-1),\dots,2(d-1),2d\}$. The set $\F_E$ is empty for $|E|>2d$ and consists of a single point for $|E|=2d$.
\begin{lemma}
\label{lemma_connected}
Suppose that $d\ge 3$ and $E\in (-2d,2d)$, or $d=2$ and $E\in(-4,0)\cup(0,4)$. Then $\F_E\setminus S_E$ is a smooth connected $d-1$-dimensional real analytic submanifold of $\T^d$.
\end{lemma}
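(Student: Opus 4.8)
The plan is to exploit the separable structure of $F$ as a sum of one–variable cosines, which exhibits $\F_E$ as a finite branched cover of a convex polytope and reduces the connectedness claim to a combinatorial gluing statement. The manifold structure itself is immediate: since $\nabla F(\theta)=-4\pi(\sin 2\pi\theta_1,\dots,\sin 2\pi\theta_d)$ vanishes precisely on $\{0,1/2\}^d$, the gradient is nonzero at every point of $\F_E\setminus S_E$. Hence $E$ is a regular value of $F$ near $\F_E\setminus S_E$, and the real-analytic implicit function theorem shows $\F_E\setminus S_E$ is a real-analytic submanifold of dimension $d-1$. The real content of the lemma is connectedness.

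To analyze connectedness, set $s=E/2$ and let $P=\{c\in[-1,1]^d:\ c_1+\dots+c_d=s\}$, which is a convex polytope of dimension $d-1$ because $s\in(-d,d)$. The map $\Phi(\theta)=(\cos 2\pi\theta_1,\dots,\cos 2\pi\theta_d)$ sends $\F_E$ onto $P$. I cover $\T^d$ by the $2^d$ boxes $I_\sigma=\prod_j I_{\sigma_j}$, where $I_0=[0,1/2]$ and $I_1=[1/2,1]$, and observe that $\Phi$ restricts to a homeomorphism of each $I_\sigma$ onto $[-1,1]^d$ (each coordinate map $\cos 2\pi(\cdot)$ is strictly monotone on $I_{\sigma_j}$). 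Consequently every sheet $F_\sigma:=\F_E\cap I_\sigma$ is homeomorphic to $P$, hence connected, and $\F_E=\bigcup_\sigma F_\sigma$. The singular set $S_E$ corresponds under $\Phi$ to the cube-vertices $\{-1,1\}^d\cap P$, a finite subset of the vertices of each $P\cong F_\sigma$; since $\dim P=d-1\ge 2$ when $d\ge 3$, deleting these finitely many points leaves each $F_\sigma\setminus S_E$ connected (and for $d=2$ the admissible energies are regular, so $S_E=\varnothing$).

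It then remains to glue the sheets. Boxes $I_\sigma$ and $I_{\sigma'}$ with $\sigma'$ differing from $\sigma$ only in the $j$-th entry overlap along $\{\theta_j=0\}$ and $\{\theta_j=1/2\}$, which map to the faces $P\cap\{c_j=1\}$ and $P\cap\{c_j=-1\}$. The decisive observation is that for $d\ge 3$ at least one of these two faces is $(d-2)$-dimensional for every $j$: the face $P\cap\{c_j=1\}$ is full-dimensional exactly for $s\in(2-d,d)$ and $P\cap\{c_j=-1\}$ exactly for $s\in(-d,d-2)$, while $(2-d,d)\cup(-d,d-2)=(-d,d)$ precisely when $d\ge 3$. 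A positive-dimensional face contains a point that is not a cube-vertex, hence a point of $\F_E\setminus S_E$ lying in both $F_\sigma$ and $F_{\sigma'}$. Since the graph on sign vectors carrying these edges is the connected hypercube, all sheets lie in a single component of $\F_E\setminus S_E$, which yields connectedness.

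I expect the crux to be exactly this last step, namely controlling the deletion of $S_E$ at the critical energies $E=4m-2d$ (where $S_E\neq\varnothing$) for $d\ge 3$: one needs simultaneously that each sheet survives point-deletion, which requires $\dim P\ge 2$, and that the inter-sheet gluing survives it, which requires a positive-dimensional gluing face. The same dimension count transparently forces the exclusion of $E=0$ when $d=2$, for there the relevant faces collapse precisely to the removed points, the hypercube loses all of its edges after deletion, and $\F_0\setminus S_0$ genuinely breaks into four arcs.
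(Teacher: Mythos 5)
Your proof is correct, and it establishes connectedness by a genuinely different mechanism than the paper, even though both arguments rest on the same cosine map $(\theta_1,\dots,\theta_d)\mapsto(\cos 2\pi\theta_1,\dots,\cos 2\pi\theta_d)$. The paper works fiberwise: the image of $\F_E\setminus S_E$ is connected, a path in the image is lifted back to the surface, and the remaining task --- joining two points of the same fiber, which differ by sign flips $\theta_j\mapsto -\theta_j$ --- is handled by an explicit path inside a two-dimensional slice $2\cos 2\pi\theta_1+2\cos 2\pi\theta_2=E-F(\theta_3,\dots,\theta_d)$, after a small perturbation making the slice energy nonzero; it is precisely this perturbation that is unavailable when $d=2$, $E=0$. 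You instead argue globally: cover $\T^d$ by the $2^d$ monotonicity boxes, identify each sheet $\F_E\cap I_\sigma$ with the convex polytope $P$ (so each sheet is connected, and remains connected after deleting the cube vertices --- which correspond exactly to $S_E$ --- once $\dim P=d-1\ge 2$), then glue adjacent sheets through cube facets via the face-dimension count $(2-d,d)\cup(-d,d-2)=(-d,d)$ for $d\ge 3$, and conclude by connectivity of the hypercube graph. The trade-off: the paper's fiber argument is shorter, and its two-dimensional slicing device is reused later (Lemma \ref{lemma_symmetry}, Theorem \ref{th_generic}); your sheet-and-gluing argument avoids the path-lifting step entirely (in the paper that lift passes through a branched cover and deserves some care near points where some $\sin 2\pi\theta_j=0$), makes the global structure of $\F_E$ completely explicit, and shows transparently where and why the excluded case $d=2$, $E=0$ breaks, in agreement with Remark \ref{rem_structure_fe}.

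One small repair: your gluing sentence ``a positive-dimensional face contains a point that is not a cube-vertex'' literally applies only for $d\ge 3$, while the case $d=2$, $E\in(-4,0)\cup(0,4)$ also needs gluing. It does go through: your own interval count shows that for $s\in(0,2)$, say, the face $P\cap\{c_j=1\}$ is the single point with $c_j=1$ and remaining coordinate $s-1\in(-1,1)$, which is not a cube vertex; since $S_E=\varnothing$ in this regime, that point lies in $\F_E\setminus S_E$ and joins the two adjacent sheets, so the hypercube graph is again connected. This is a one-line addendum, not a gap in the method.
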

\begin{proof}
The fact that $\F_E\setminus S_E$ is a submanifold follows from the implicit function theorem. To show that $\F_E\setminus S_E$ is connected, consider the map
$$
f\colon \F_E\setminus S_E \to [-1,1]^d,\,\,(\theta_1,\ldots,\theta_d)\mapsto (\cos 2\pi \theta_1,\cos 2\pi \theta_2,\ldots,\cos 2\pi \theta_d).
$$
It is easy to see that $f(\F_E\setminus S_E)$ is connected. Let $a,b\in \F_E\setminus S_E$. Since there is a continuous path between $f(a)$ and $f(b)$, it can be lifted to a continuous path between $a,b'\in f(\F_E\setminus S_E)$ where $f(b')=f(b)$. It remains to explain how to connect $b$ with $b'$, for which it is sufficient to construct a path, say, from  $(\theta_1,\ldots,\theta_d)$ to $(-\theta_1,\ldots,\theta_d)$ (assuming that both points are on $\F_E$). Since $|E|<2d$, by a small continuous perturbation one can assume that $|\cos 2\pi \theta_j|<1$ for all $j$. In this case, one can fix the coordinates $(\theta_3,\ldots,\theta_d)$ and consider the section by the two-dimensional hyperplane:  
$$
2\cos2\pi \theta_1+2\cos 2\pi \theta_2=E-F(\theta_3,\ldots,\theta_d).
$$
By assumption, the right hand side is contained in $(-4,4)$. If it is not zero, the equation defines a smooth connected curve in the coordinates $\theta_1,\theta_2$, where one can easily connect $(\theta_1,\theta_2)$ with $(-\theta_1,\theta_2)$ (still, staying within $\F_E\setminus S_E$). If the right hand side is $0$, one can perturb both $\theta_2$ and $\theta_3$ to make it non-zero. The only case when it is not possible is when there is no $\theta_3$, that is, $d=2$ and $E=0$.\,\end{proof}
\begin{remark}
\label{rem_structure_fe}
Let $d=2$. For $|E|>4$, the set $\F_E$ is empty. For $|E|=4$, $\F_E$ consists of a single point $(0,0)\in \T^2$ or $(1/2,1/2)\in \T^2$. For $0<|E|<4$, the set $\F_E$ is a one-dimensional simple closed real analytic curve surrounding an oval-shaped area with center at $(0,0)$ for $E>0$ and $(1/2,1/2)$ for $E<0$. If $E=0$, the set $\mathcal F_0$ is a union of four intervals whose interiors do not intersect. The set $\mathcal F_0\setminus S_0$ has four connected components.
\end{remark}
\begin{lemma}
\label{lemma_symmetry}
Let $d=2$. Suppose that $E_1,E_2\in (0,4)$ and $\F_{E_1}+b=\F_{E_2}$ for some $b\in \T^2$. Then $b=0$ and $E_1=E_2$.
\end{lemma}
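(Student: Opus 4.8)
The plan is to pass from the level curves to the oval regions they bound and then exploit the central symmetry of $F$. For $0<E<4$, Remark \ref{rem_structure_fe} says that $\F_E$ is a null-homotopic simple closed curve bounding an oval-shaped topological disk; write $U_E=\{\theta\in\T^2\colon F(\theta)>E\}$ for this open disk, which contains the maximum point $(0,0)$ of $F$. From $F(\theta+(1/2,1/2))=-F(\theta)$ the translation by $(1/2,1/2)$ interchanges $\{F>0\}$ and $\{F<0\}$, so each has area $1/2$; since $U_E\subsetneq\{F>0\}$, the area of $U_E$ is strictly less than $1/2$. The hypothesis $\F_{E_1}+b=\F_{E_2}$ says that translation by $b$ is a homeomorphism of $\T^2$ taking $\F_{E_1}$ to $\F_{E_2}$, hence taking the two components of $\T^2\setminus\F_{E_1}$ to those of $\T^2\setminus\F_{E_2}$ while preserving area. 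As $U_{E_1}$ is the component of area below $1/2$, its image must be $U_{E_2}$, so $U_{E_1}+b=U_{E_2}$.

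The crucial step is to determine $b$ by lifting to the universal cover $p\colon\R^2\to\T^2=\R^2/\Z^2$. Since $U_{E_i}$ is contractible, $p^{-1}(U_{E_i})$ is a disjoint union of bounded open sets each mapped homeomorphically onto $U_{E_i}$; let $\wt U_i$ be the one containing $0$, which is available because $(0,0)\in U_{E_i}$. The evenness $F(-\theta)=F(\theta)$ gives $U_{E_i}=-U_{E_i}$, so $p^{-1}(U_{E_i})$ is symmetric under $x\mapsto -x$, and since $\wt U_i$ is the unique lift through the fixed point $0$ we obtain $\wt U_i=-\wt U_i$. Thus $\wt U_i$ is bounded and centrally symmetric, so its centroid is $0$. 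Lifting $U_{E_1}+b=U_{E_2}$, translation by a lift $w\in\R^2$ of $b$ carries $\wt U_1$ onto a full component of $p^{-1}(U_{E_2})$, i.e.\ onto $\wt U_2+\gamma$ for some $\gamma\in\Z^2$. Comparing centroids of $\wt U_1+w$ and $\wt U_2+\gamma$ gives $w=\gamma\in\Z^2$, that is $b=0$ in $\T^2$. (Alternatively one may combine $U_{E_1}+b=U_{E_2}$ with its image under $\theta\mapsto-\theta$, namely $U_{E_1}-b=U_{E_2}$, to get $U_{E_1}+2b=U_{E_1}$; a nonempty bounded lift cannot be fixed by a nonzero translation, so $2b=0$, and checking the three nonzero half-periods --- where $F(b)\le 0$ contradicts $b\in U_{E_2}$, i.e.\ $F(b)>E_2>0$ --- again forces $b=0$.)

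With $b=0$ we have $\F_{E_1}=\F_{E_2}$, and evaluating $F$ at any point of this common nonempty set gives $E_1=E_2$. The one genuine difficulty is the topological bookkeeping of the second paragraph: that a contractible subset of $\T^2$ lifts to bounded pieces carrying a well-defined centroid, and that such a bounded set has no nontrivial translational symmetry. Granting this, the rest is elementary once $U_{E_1}+b=U_{E_2}$ has been established.
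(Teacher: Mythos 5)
Your proof is correct and takes essentially the same route as the paper's: both arguments lift to $\R^2$ and exploit the central symmetry $F(-\theta)=F(\theta)$ to pin barycenters at points of $\Z^2$, forcing $b\in\Z^2$ and hence $b=0$, $E_1=E_2$. The differences are only bookkeeping --- you compare centroids of the enclosed disks $U_E$ (after an area argument to match components of the complement), whereas the paper compares barycenters of the oval components of $\F_E$ directly; your parenthetical alternative via $U_{E_1}+2b=U_{E_1}$ and the half-period check is a valid shortcut the paper does not need.
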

\begin{proof}
Consider both $\F_{E_1}$ and $\F_{E_2}$ as $\Z^2$-periodic subsets of $\R^2$. Since each connected component of $\F_{E_1}$ and $\F_{E_2}$ is an analytic curve, we must have $\F_{E_1}+b=\F_{E_2}$. Since the barycenter of each connected component of $\F_{E_1}$ must be translated into the barycenter of some connected component of $\F_{E_2}$, we must have $b\in \Z^2$ and therefore, without loss of generality, $b=0$ and $\F_{E_1}=\F_{E_2}$. In view of the definition of $\F_E$, the latter is only possible for $E_1=E_2$.
\end{proof}
\begin{cor}
\label{cor_symmetry}
Let $d=2$ and $E_1,E_2\in \R$. Suppose that $\F_{E_1}\neq \varnothing$ and $\F_{E_1}+b=\F_{E_2}$ for some $b\in \T^2$. Then either $b=0$ and $E_1=E_2$, or $b=(1/2,1/2)$ and $E_1=-E_2$.	
\end{cor}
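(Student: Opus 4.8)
The plan is to reduce everything to Lemma \ref{lemma_symmetry} by exploiting the single reflection symmetry of $F$. The starting observation is that $\theta\mapsto\theta+(1/2,1/2)$ shifts the argument of each cosine by $\pi$, so $F(\theta+(1/2,1/2))=-F(\theta)$, and therefore
$$
\F_{-E}=\F_E+(1/2,1/2),\qquad E\in\R.
$$
This identity flips the sign of any band value at the cost of a translation by $(1/2,1/2)$, so it suffices to treat $E_1\in(0,4)$ in detail: the range $E_1\in(-4,0)$ reduces to it by first rewriting $\F_{E_1}=\F_{-E_1}+(1/2,1/2)$ and relabeling, while the degenerate values $E_1\in\{-4,0,4\}$ are handled separately. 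Note that $\F_{E_1}\neq\varnothing$ and $\F_{E_2}=\F_{E_1}+b$ already force $|E_1|,|E_2|\le 4$, and that $\F_{E_2}$ is a translate of $\F_{E_1}$, hence of the same topological type.

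For the main case $E_1\in(0,4)$, Remark \ref{rem_structure_fe} says $\F_{E_1}$ is a simple closed oval, so its translate $\F_{E_2}$ is again a simple closed oval; this forces $E_2\in(-4,0)\cup(0,4)$ and excludes $E_2\in\{-4,4\}$ (a single point) and $E_2=0$ (a union of crossing segments). If $E_2\in(0,4)$, Lemma \ref{lemma_symmetry} gives directly $b=0$ and $E_1=E_2$. If $E_2\in(-4,0)$, I would write $\F_{E_2}=\F_{-E_2}+(1/2,1/2)$ via the symmetry, so that $\F_{E_1}+\bigl(b-(1/2,1/2)\bigr)=\F_{-E_2}$ with both $E_1,-E_2\in(0,4)$; Lemma \ref{lemma_symmetry} then yields $b=(1/2,1/2)$ and $E_1=-E_2$. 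The case $E_1\in(-4,0)$ is identical after substituting $\F_{E_1}=\F_{-E_1}+(1/2,1/2)$ and tracking the extra shift of $b$.

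It remains to dispose of $E_1\in\{-4,0,4\}$. When $|E_1|=4$ the set $\F_{E_1}$ is a single point ($(0,0)$ if $E_1=4$, $(1/2,1/2)$ if $E_1=-4$), so $\F_{E_2}$ is a single point and $|E_2|=4$; comparing the two marked points reads off $b$ and the sign, giving exactly the two asserted alternatives. When $E_1=0$, the factorization $\cos2\pi\theta_1+\cos2\pi\theta_2=2\cos\pi(\theta_1+\theta_2)\cos\pi(\theta_1-\theta_2)$ shows $\F_0$ is the union of the diagonal families $\{\theta_1+\theta_2\in\tfrac12+\Z\}$ and $\{\theta_1-\theta_2\in\tfrac12+\Z\}$; being one-dimensional and piecewise linear it can only be a translate of a set of the same type, forcing $E_2=0$. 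Since a translation preserves the slope of each family, $\F_0+b=\F_0$ requires $b_1+b_2\in\Z$ and $b_1-b_2\in\Z$, whence $b\in\{0,(1/2,1/2)\}$, again matching the conclusion.

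The only genuinely delicate point is the topological rigidity invoked above: that a translate of the oval $\F_{E_1}$ can coincide with $\F_{E_2}$ only for $E_2\in(-4,0)\cup(0,4)$, and that the grid $\F_0$ admits no translational self-symmetry beyond $0$ and $(1/2,1/2)$. Both are read off directly from Remark \ref{rem_structure_fe}, so the substantive content is just the bookkeeping of the four sign/magnitude regimes; once the identity $\F_{-E}=\F_E+(1/2,1/2)$ is in place, Lemma \ref{lemma_symmetry} supplies the rest.
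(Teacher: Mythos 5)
Your proposal is correct and follows essentially the same route as the paper: the key identity $\F_{-E}=\F_E+(1/2,1/2)$, reduction of the regime $0<|E_1|<4$ to Lemma \ref{lemma_symmetry}, and separate treatment of the degenerate energies $|E_1|=4$ and $E_1=0$. The only (harmless) variation is at $E=0$, where you use the factorization $2\cos\pi(\theta_1+\theta_2)\cos\pi(\theta_1-\theta_2)$ and a slope/corner argument to pin down $E_2=0$ and $b\in\{0,(1/2,1/2)\}$, while the paper distinguishes $E=0$ by connectedness of the $\Z^2$-periodic lift and leaves the grid self-symmetry computation implicit.
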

\begin{proof}
From Remark \ref{rem_structure_fe}, it follows that $E=0$ is the only case for which $\F_E$, considered as a $\Z^2$-periodic subset of $\R^2$, is non-empty and connected. Therefore, $E_1=0$ if and only if $E_2=0$, and the statement of the corollary follows in this case. Since $\F_{E_1}=\F_{-E_1}+(1/2,1/2)$, the case $0<|E_1|<4$ follows from Lemma \ref{lemma_symmetry}. If $|E_1|=4$, then $\F_{E_1}=\Z^2$ or $\F_{E_1}=\Z^2+(1/2,1/2)$, therefore we must also have $|E_2|=4$ and the conclusion follows. 
\end{proof}
Let $\F_E$ be a Fermi surface \eqref{eq_fermi_def}. We will call a point $\theta\in \F_E$ {\it generic} if $\theta+b\notin \F_E$ for any $b\in \Lambda'$, $b\neq 0 \,\mod\, \Z^d$. Except for the special case $d=2$, $E=0$, almost every point of $\F_E$ is generic. The proof below is based on the following observation: if $\F_E$ has too many non-generic points, then it has a non-trivial translational symmetry by a vector from $\Lambda'$. This symmetry generates a symmetry on two-dimensional slices. Each slice is a two-dimensional Fermi surface, but the symmetry does not have to preserve the energy of a slice. However, Lemma \ref{lemma_symmetry} implies that, even for slices with different energies, possible (non-trivial) symmetries that transform one into another, are restricted to vectors whose components are equal to $1/2$.
\begin{theorem}
\label{th_generic}
Let $\F_E$ be a Fermi surface \eqref{eq_fermi_def} with $d\ge 3$ or $d=2$ and $E\neq 0$. Suppose that $\Gamma$ is not even $($that is, suppose that $(1/2,\ldots,1/2)\notin \Gamma'$ $)$. Then the set of non-generic points of $\F_E$ is contained in a finite union of $(d-2)$-dimensional real analytic submanifolds of $\T^d$. As a consequence, the set of generic points is open in $\F_E$ and has full $(d-1)$-dimensional measure in $\F_E$.
\end{theorem}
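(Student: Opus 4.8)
The plan is to write the set of non-generic points as a finite union and control each piece. Since $\Lambda'=\Gamma'/\Z^d$ is a finite group with $N$ elements, the non-generic set is exactly
$$
\bigcup_{b\in\Lambda'\setminus\{0\}}\big(\F_E\cap(\F_E-b)\big),\qquad \F_E\cap(\F_E-b)=\{\theta\in\F_E\colon \theta+b\in\F_E\}.
$$
Each summand is closed (an intersection of closed sets), so this union is closed in $\F_E$ and the generic set is automatically open; moreover, once we know each summand sits inside finitely many submanifolds of dimension $\le d-2$, the full-measure statement follows since such a union is $(d-1)$-null. It therefore suffices to bound the dimension of $\F_E\cap(\F_E-b)$ for each fixed $b\ne 0$. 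First I would dispose of the degenerate energies: if $|E|>2d$ then $\F_E=\varnothing$, and if $|E|=2d$ then $\F_E$ is a single point, so the claim is trivial (recall $d\ge 2$). Henceforth I assume $|E|<2d$, so that Lemma \ref{lemma_connected} applies and $\F_E\setminus S_E$ is a connected $(d-1)$-dimensional real-analytic manifold with $S_E$ finite.

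For fixed $b$, consider the real-analytic function $g_b(\theta)=F(\theta+b)-E$ restricted to the connected manifold $\F_E\setminus S_E$. By the identity principle for real-analytic functions on a connected manifold, there are two possibilities. Either $g_b$ vanishes on a set of dimension at most $d-2$: then its zero set is a proper analytic subset of $\F_E\setminus S_E$, which by compactness of $\F_E$ and real-analytic stratification decomposes into finitely many submanifolds of dimension $\le d-2$, and adjoining the finite set $S_E$ keeps us within dimension $d-2$, finishing this $b$. Or $g_b\equiv 0$ on $\F_E\setminus S_E$, i.e. $F(\theta+b)=E$ for all $\theta\in\F_E$, so $\F_E+b\subseteq\F_E$. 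In the second case, since $b$ has finite order in the finite group $\Lambda'$, say $mb\in\Z^d$, the chain $\F_E\supseteq\F_E+b\supseteq\cdots\supseteq\F_E+mb=\F_E$ (using $\Z^d$-periodicity of $F$) forces all inclusions to be equalities, hence $\F_E+b=\F_E$. The remaining task is to rule out this exact translational symmetry for $b\in\Lambda'\setminus\{0\}$, using that $\Gamma$ is not even.

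To eliminate the symmetry $\F_E+b=\F_E$ I would slice down to dimension two. Fix a pair $i\ne j$ and freeze the remaining $d-2$ coordinates at values $(t_k)_{k\ne i,j}$ chosen so that the slice energy $E'=E-\sum_{k\ne i,j}2\cos(2\pi t_k)$ lies in $[-4,4]$; this is possible precisely because $|E|<2d$. On such a slice $\F_E$ restricts to a two-dimensional Fermi surface $\F^{(2)}_{E'}$, and the translation by $b$ carries this slice onto the slice over the shifted coordinates $(t_k+b_k)$, yielding $\F^{(2)}_{E'}+(b_i,b_j)=\F^{(2)}_{E''}$ with $E''=E-\sum_{k\ne i,j}2\cos(2\pi(t_k+b_k))$ (both inclusions coming from applying the symmetry by $+b$ and by $-b$). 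Corollary \ref{cor_symmetry} then forces $(b_i,b_j)\in\{(0,0),(1/2,1/2)\}$. As this holds for every pair $i\ne j$, all components $b_k$ are equal and lie in $\{0,1/2\}$, so $b=0$ or $b=(1/2,\ldots,1/2)$. The former is excluded since $b\ne 0$, and the latter would mean $(1/2,\ldots,1/2)\in\Gamma'$, i.e. $\Gamma$ even, contrary to hypothesis. (For $d=2$ there is nothing to freeze, and one applies Corollary \ref{cor_symmetry} directly with $E_1=E_2=E$, where $b=(1/2,1/2)$ would force $E=-E$, excluded as $E\ne 0$.) This contradiction removes the second case of the dichotomy, giving the dimension bound and, as noted above, the openness and full-measure consequences.

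I expect the main obstacle to be the slicing step: one must verify that for every pair of coordinates a nonempty two-dimensional slice exists and then correctly identify the two slice energies $E',E''$ so that Corollary \ref{cor_symmetry} applies verbatim. Conceptually, the heart of the argument is the interplay between the global analytic rigidity supplied by Lemma \ref{lemma_connected} (connectedness of $\F_E\setminus S_E$, which upgrades ``too many non-generic points'' into an exact symmetry) and the two-dimensional rigidity of Corollary \ref{cor_symmetry} (which destroys that symmetry unless it is the checkerboard shift), so it is essential that the excluded case $d=2,\,E=0$—where $\F_E\setminus S_E$ is disconnected—is precisely the case where the connectedness input fails.
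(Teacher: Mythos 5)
Your proof is correct and follows essentially the same route as the paper's: the identity-principle dichotomy for $F(\theta+b)-E$ on the connected real-analytic manifold $\F_E\setminus S_E$ (Lemma \ref{lemma_connected}), upgrading the degenerate case to the exact symmetry $\F_E+b=\F_E$, and then two-dimensional slicing with Corollary \ref{cor_symmetry} applied to every pair of coordinates to force $b=0$ or $b=(1/2,\ldots,1/2)$, the latter excluded because $\Gamma$ is not even. Your additions --- the finite-order argument promoting $\F_E+b\subseteq\F_E$ to equality, the explicit disposal of $|E|\ge 2d$, and citing Corollary \ref{cor_symmetry} rather than Lemma \ref{lemma_symmetry} (whose hypothesis $E_1,E_2\in(0,4)$ the slice energies need not satisfy) --- are minor tidy-ups of points the paper leaves implicit.
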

\begin{proof}
For each $b\in \Lambda'\setminus\{0\}$, consider the function $F(\theta+b)-E$. Clearly, this function is real analytic on a connected real analytic manifold $\F_E\setminus S_E$ (in fact, it is an rational function in appropriate coordinates). As a consequence, it is either identically zero, or its zero set is contained in a finite union of $(d-2)$-dimensional submanifolds. In order to prove the theorem, it would be sufficient to show that none of the functions $F(\theta+b)-E$ is identically zero.

Suppose that $F(\theta+b)\equiv E$ on $\F_E\setminus S_E$. By continuity, the same also holds on $\F_E$, and therefore we have $\F_E+b=\F_E$. Fix some $\theta'\in \T^{d-2}$ and denote by
$$
P(\theta')=\{\theta\in \T^d\colon \theta=(\theta_1,\theta_2,\theta')\}
$$
the two-dimensional torus obtained by fixing all coordinates of $\theta$ except for the first two. Suppose that $\F_E\cap P(\theta')\neq \varnothing$. Let $b=(b_1,b_2,b')$ and define
$$
\G_j=\{(\theta_1,\theta_2)\colon 2\cos 2\pi \theta_1+2\cos 2\pi \theta_2=E_j\},\quad j=1,2,
$$
where
$$
E_1=E-F(\theta'),\quad E_2=E-F(\theta'+b').
$$
Since $\F_E=\F_E+b$, we have
$$
\G_1+(b_1,b_2)= \G_2.
$$
Apply Lemma \ref{lemma_symmetry} with the above $E_1,E_2$, and conclude that $b_1=b_2=1/2$ or $b_1=b_2=0$. Now, without loss of generality we could have chosen any two components of $\theta$ instead of $\theta_1$ and $\theta_2$, and hence we have $b_1=b_2=\ldots=b_d$. The case $b_1=b_2=\ldots=1/2$ is forbidden due to the assumption on $\Gamma$, which completes the proof.
\end{proof}
\begin{remark}
\label{rem_proff_d2}	
While the proof of Theorem \ref{th_generic} works for $d=2$ if $E\neq 0$, it fails for the special case   $d=2$, $E=0$, if $\Gamma'$ contains vectors of the form $(a,a)$ or $(a,-a)$ (in other words, if $\Gamma$ is $p$-divisible for some $p\ge 3$). However, one can modify the argument as follows, in a way similar to \cite{Han}.
\end{remark}
\begin{prop}
\label{prop_2d_generic}
Assume that $d=2$ and $E=0$, and $\Gamma$ is not even. Then, almost every point $\theta\in \mathcal F_0$ satisfies the following:
\begin{enumerate}
	\item $\#\{b\in \Lambda'\colon \theta+b\in \F_0\}$ is odd.
	\item For any $b\in \Lambda'$, we have $(\nabla F)(\theta+b)\neq 0$.
\end{enumerate}
\end{prop}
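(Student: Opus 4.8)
The plan is to exploit the factorization $F(\theta)=4\cos\bigl(\pi(\theta_1+\theta_2)\bigr)\cos\bigl(\pi(\theta_1-\theta_2)\bigr)$, which shows that $\F_0=L_+\cup L_-$, where $L_\pm=\{\theta\in\T^2\colon \theta_1\pm\theta_2=1/2 \mod 1\}$ are two families of line segments. These overlap only at the two singular points $S_0=\{(1/2,0),(0,1/2)\}$, so almost every $\theta\in\F_0$ lies on exactly one of $L_+,L_-$. I would treat the two cases symmetrically and describe $L_-$ in detail.

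For part (1), fix $\theta\in L_-$, so that $\theta_1-\theta_2=1/2 \mod 1$. For $b\in\Lambda'$ the translate $\theta+b$ lies on $L_-$ precisely when $b_1-b_2=0 \mod 1$; accordingly I set $\Lambda'_-=\{b\in\Lambda'\colon b_1-b_2=0 \mod 1\}$, which is the kernel of the homomorphism $\Lambda'\to\R/\Z$, $b\mapsto b_1-b_2$, hence a subgroup of the finite abelian group $\Lambda'$. Each $b\in\Lambda'_-$ contributes a distinct point $\theta+b\in L_-\subseteq\F_0$, so writing $n(\theta)=\#\{b\in\Lambda'\colon\theta+b\in\F_0\}$ we get $n(\theta)\ge|\Lambda'_-|$. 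The only other way to contribute is $\theta+b\in L_+$ for some $b\notin\Lambda'_-$; but for each fixed $b$ this forces $\theta_1+\theta_2$ into a finite set, so it occurs only on a measure-zero subset of $L_-$. Discarding this subset (and the finite set where some $\theta+b$ is singular), I obtain $n(\theta)=|\Lambda'_-|$ for almost every $\theta\in L_-$.

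It then remains to show that $|\Lambda'_-|$ is odd. If $2$ divided $|\Lambda'_-|$, Cauchy's theorem would produce an element $b\in\Lambda'_-$ of order two, i.e.\ a nonzero $b\in\{0,1/2\}^2 \mod \Z^2$ with $b_1-b_2=0 \mod 1$. Among $(1/2,0)$, $(0,1/2)$, $(1/2,1/2)$, only $(1/2,1/2)$ satisfies the diagonal relation, and $(1/2,1/2)\in\Lambda'$ is exactly the condition that $\Gamma$ be even, which is excluded by hypothesis. Hence $\Lambda'_-$ has no $2$-torsion and $|\Lambda'_-|$ is odd. The identical argument with $\Lambda'_+=\{b\in\Lambda'\colon b_1+b_2=0 \mod 1\}$ handles $\theta\in L_+$, completing (1). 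I expect this parity step — localizing the only possible obstruction to oddness onto the single excluded vector $(1/2,1/2)$ — to be the crux of the argument, the earlier reduction being essentially bookkeeping about the two line families.

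Part (2) should be routine. The condition $(\nabla F)(\theta+b)=0$ forces $\theta+b\in\{0,1/2\}^2 \mod \Z^2$, so for each of the finitely many $b\in\Lambda'$ the exceptional set $\{\theta\colon(\nabla F)(\theta+b)=0\}$ is finite. Its union, intersected with the one-dimensional set $\F_0$, is a finite subset of $\F_0$ and therefore has measure zero, which gives (2).
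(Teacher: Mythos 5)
Your proof is correct and follows essentially the same route as the paper: decompose $\F_0$ into the segments parallel to $(1,\pm 1)$, observe that for almost every $\theta$ only vectors of the form $(a,a)$ or $(a,-a)$ in $\Lambda'$ can contribute, identify the count with the size of the corresponding subgroup, and derive oddness from the exclusion of $(1/2,1/2)\in\Gamma'$. Your packaging of the parity step via Cauchy's theorem on the kernel subgroup $\Lambda'_{\pm}$ is just a slightly more formal phrasing of the paper's observation that the diagonal vectors form a group whose order is even only if it contains $(1/2,1/2)$.
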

\begin{proof}
First, note that (2) holds for every point on $\mathcal F_0$ which is not a translation of a corner point by a vector $b\in \Lambda'$. Therefore, we only need to establish (1). Recall that $\mathcal F_0$ is a union of four line segments, each of which is parallel to $(1,1)$ or $(1,-1)$. As a consequence, for almost every point $\theta\in \mathcal F_0$ and any vector $b\in \Lambda'$ that is not of the form $(a,a)$ or $(a,-a)$, we would have $\theta+b\notin \mathcal F_0$. In other words, after removing finitely many points from $\mathcal F_0$, we can assume, for the purposes of establishing (1), that $\Lambda'$ only contains vectors of the form $(a,a)$ and $(a,-a)$.

Under the above assumptions, the case $a=1/2p$ is not allowed since $\Gamma$ is not even. If $\theta$ belongs to the interior of a line segment of $\mathcal F_0$ parallel to $(1,1)$, the cardinality of the set in (1) becomes equal to  the total number of vectors of the form $(a,a)$ in $\Gamma'$, which is odd. Similarly, for $\theta$ in the interiors of the remaining line segments, one needs to consider vectors of the form $(a,-a)\in\Gamma'$; again, the total number of such vectors is odd. In fact, one can check that (1) holds for all points $\theta\in \mathcal F_0$ including the corner point if one only counts vectors of the form $(a,a)$ and $(a,-a)$.
\end{proof}

\subsection{Proof of Theorem \ref{th_main_bz}} After all preparations, the rest of the proof is standard as discussed in \cite{Kruger,Fillman,Han}. In order to prove Theorem \ref{th_main_bz}, it would be sufficient to show that any $E\in (-2d,2d)$ is in the interior of some spectral band of $\Delta$. Indeed, if $E_j$ is in some gap of the spectrum of $\Delta+\ep_j V$ and $\ep_j\to 0+$ as $j\to \infty$, one can by choosing a subsequence assume that, for some fixed $1\le k\le N-1$, we have
$$
E_k^+(\ep_j)\le E_j\le E_{k+1}^-(\ep_j),
$$
where $E_k^{\pm}(\ep_j)$ denote the spectral band edges of $\Delta+\ep_j V$. By choosing a further subsequence, we can additionally assume that $E_j\to E\in [-2d,2d]$. Since the spectral band edges are continuous in $\ep$, we have
$$
E_k(0)^+\le E\le E_{k+1}^-(0).
$$
In other words, $E$ must be between two consecutive spectral bands of the free Laplacian $\Delta$ with non-overlapping interiors, and therefore $E\in (-2d,2d)$ since $\Delta$ has no flat bands.

Let $E\in (-2d,2d)$. The band functions of the free Laplacian $\Delta$ have the form
$$
\{E_1(\theta),\ldots,E_N(\theta)\}=\{F(\theta+b)\colon b\in \Lambda'\}.
$$
As a consequence, the following is true for the eigenvalue counting functions of the operators $\Delta_{\theta}$:
\beq
\label{eq_count_def}
N(\theta,E)=\#\{j\colon E_j(\theta)\le E\}=\#\{b\in \Lambda'\colon F(\theta+b)\le E\}.
\eeq
Recall that $\sigma(\Delta)$ is an interval. If $E$ is not in the interior of any band, then the counting function must be a constant in $\theta$:
$$
\#\{b\in \Lambda'\colon F(\theta+b)\le E\}=\mathrm{const}.
$$
Therefore, the following is sufficient: for any $E\in (-2d,2d)$, find $\theta_1$, $\theta_2$ such that  $N(\theta_1,E)\neq N(\theta_2,E)$. 

First, let us consider the case $d\ge 3$ or $E\neq 0$. Let $\theta$ be a generic point in the sense of Theorem \ref{th_generic} such that $(\nabla F)(\theta)\neq 0$, and let $u$ be any direction satisfying $\<u,(\nabla F)(\theta)\>\neq 0$. Clearly, for $|t|\ll 1$, the function $t\mapsto F(\theta+t u)-E$ is strictly monotone in $t$ and vanishes at $t=0$. On the other hand, $F(\theta+b+tu)\neq E$ for all $b\in \Lambda'\setminus \{0\}$. As a consequence,
$$
N(\theta+t u,E)=N(\theta-t u,E)\pm 1
$$
for $0<t\ll 1$, which provides required points $\theta_1,\theta_2$.

In the case where Theorem \ref{th_generic} is not applicable (that is, $E=0$ and $d=2$), one can use Proposition \ref{prop_2d_generic} and observe that if one passes any $\theta$ from that proposition along any direction which is not perpendicular to $(\nabla F)(\theta+b)$ for all $b\in \Lambda'$, then the counting function changes by an odd number, and therefore also cannot be a constant. The latter is a simple version of the ``perturb and count'' argument from \cite{Han}.\,\qed
\section{The complex Fermi surface and the proof of Theorem \ref{th_main_edges}}
Since $h(\theta)$ is an operator on a finite-dimensional Hilbert space, all questions about spectral bands, essentially, become questions about some concrete algebraic equations (depending on $\Gamma$ and $V$). In particular, spectral band functions have the following description, assuming $\theta\in \T^d$:
$$
\exists j\colon E=E_j(\theta) \quad \text{ if and only if }\quad \det(h(\theta)-E)=0.
$$
The determinant in the right hand side is a real analytic function of $\theta\in \T^d$. In fact, it also extends to a complex analytic function of $\theta\in \C^d$, which is $\Gamma'$-periodic. Let $z=e^{2\pi i \theta_1}$. One can easily check (using, for example, the explicit formula \eqref{eq_hhat_def}) that $\det(h(\theta)-E)$ is a Laurent polynomial in $z$, whose coefficients are complex analytic functions of $\theta_2,\ldots,\theta_d,E$.

The following result is well known and implies that $\Delta+V$ has no flat bands.
\begin{prop}
\label{prop_ac}	
Fix $(\theta_2,\ldots,\theta_d)\in \C^{d-1}$ and $E\in \mathbb C$. Then the set
$$
\{\theta_1\in \C\colon \det(h(\theta_1,\ldots,\theta_d)-E)=0\}
$$
is finite modulo translations $\theta_1\mapsto \theta_1+1$.
\end{prop}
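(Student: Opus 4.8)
The plan is to pass to the variable $z=e^{2\pi i\theta_1}$. The map $\theta_1\mapsto z$ descends to a bijection between $\C/\Z$ and the punctured plane $\C^*=\C\setminus\{0\}$, and it collapses the translation $\theta_1\mapsto\theta_1+1$ to the identity. Hence the assertion is equivalent to the statement that, for fixed $(\theta_2,\ldots,\theta_d,E)$, the set of $z\in\C^*$ solving $\det(\hat h(\theta)-E)=0$ is finite. Since the determinant is invariant under the passage to the representation \eqref{eq_hhat_def}, it suffices to prove that $z\mapsto\det(\hat h(\theta)-E)$ is a Laurent polynomial in $z$ that is not identically zero: a nonzero Laurent polynomial has only finitely many zeros in $\C^*$ (multiply by a power of $z$ to reduce to the fundamental theorem of algebra).

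First I would record how $z$ enters the matrix \eqref{eq_hhat_def}. The only $\theta_1$-dependence sits on the diagonal, through
$$
F(\theta+b)=e^{2\pi i b_1}\,z+e^{-2\pi i b_1}\,z^{-1}+\sum_{k=2}^{d}2\cos\bigl(2\pi(\theta_k+b_k)\bigr),
$$
while the off-diagonal entries $\hat V(b-c)$ do not depend on $\theta$ at all. Writing $D_{\pm}=\diag\bigl(e^{\pm 2\pi i b_1}\bigr)_{b\in\Lambda'}$ and collecting the $z$-independent part into a single matrix $M=M(\theta_2,\ldots,\theta_d,E)$, we obtain
$$
\hat h(\theta)-E=z\,D_{+}+z^{-1}D_{-}+M.
$$
In particular every entry is a Laurent polynomial in $z$ of the form $\alpha z+\beta+\gamma z^{-1}$ (with $\alpha=\gamma=0$ off the diagonal), so $\det(\hat h(\theta)-E)$ is a Laurent polynomial in $z$ supported in degrees $-N\le k\le N$.

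It then remains to exhibit one nonzero coefficient, for which the cleanest choice is the top coefficient, that of $z^{N}$. In the Leibniz expansion of the determinant a factor of $z$ can arise only from the $zD_{+}$ part of a diagonal entry, so the only permutation capable of producing $z^{N}$ is the identity, whose contribution is $\prod_{b\in\Lambda'}\bigl(e^{2\pi i b_1}z\bigr)=z^{N}\prod_{b\in\Lambda'}e^{2\pi i b_1}$. Hence the coefficient of $z^{N}$ equals $\prod_{b\in\Lambda'}e^{2\pi i b_1}$, a product of unimodular numbers, which is nonzero and, notably, independent of $(\theta_2,\ldots,\theta_d,E)$. Therefore $\det(\hat h(\theta)-E)$ is a nonzero Laurent polynomial in $z$ for every choice of the fixed parameters, and the proposition follows. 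The only point requiring genuine care is this last degree bookkeeping—checking that no nonidentity permutation, which necessarily uses an off-diagonal and hence $z$-independent factor, can reach degree $N$—but this is immediate from the observation that each diagonal factor contributes at most one power of $z$ and each off-diagonal factor contributes none.
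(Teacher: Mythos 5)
Your proof is correct, but it takes a genuinely different route from the paper's. The paper reduces, as you do, to showing that $\det(h(\theta)-E)$ --- already observed before the proposition to be a Laurent polynomial in $z=e^{2\pi i\theta_1}$ --- is not identically zero in $\theta_1$; but instead of computing coefficients it argues by a soft asymptotic estimate: $|\cos 2\pi\theta_1|\to\infty$ as $|\im\theta_1|\to\infty$ uniformly in $\re\theta_1$, so all eigenvalues of the operator $\Delta(\theta)$ (which is normal, indeed diagonal in the representation \eqref{eq_hhat_def}) tend to infinity in absolute value, and since $h(\theta)-E$ is a bounded perturbation of a normal operator, its eigenvalues, and hence the determinant, are nonzero once $|\im\theta_1|$ is large. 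You replace this operator-theoretic argument with exact algebra: the decomposition $\hat h(\theta)-E=zD_++z^{-1}D_-+M$ and the Leibniz bookkeeping (off-diagonal entries are $z$-free, so only the identity permutation reaches degree $N$) correctly identify the top coefficient as the unimodular constant $\prod_{b\in\Lambda'}e^{2\pi i b_1}$. Each approach buys something. The paper's estimate is robust --- it needs nothing about the matrix beyond diagonal dominance at imaginary infinity, and the same cosine asymptotics \eqref{eq_cosine_full} are recycled in the more delicate Lemma \ref{lemma_separation} --- while yours is sharper and fully quantitative: by the symmetric computation the coefficient of $z^{-N}$ equals $\prod_{b\in\Lambda'}e^{-2\pi i b_1}\neq 0$, so the Laurent polynomial has extreme degrees exactly $\pm N$ with parameter-independent unimodular coefficients, giving exactly $2N$ roots in $\C^*$ counted with multiplicity, uniformly in $(\theta_2,\ldots,\theta_d,E)$. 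As a bonus, your computation shows that the precaution taken in the proof of Proposition \ref{prop_discriminant} --- that multiplying by a power of $z$ might create a spurious root at $z=0$ because the lowest coefficient could vanish --- is in fact never triggered for this family, since the translations there act only on $\theta_2,\ldots,\theta_d$ and leave both extreme coefficients untouched.
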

\begin{proof}
The set under consideration is invariant under translations $\theta_1\mapsto\theta_1+1$, and is also a zero set of an entire function. Therefore, it is sufficient to show that $\det(h(\theta)-E)$ is not identically zero as a function of $\theta_1$. Since $|\cos 2\pi\theta_1|\to +\infty$ as $|\im \theta_1|\to +\infty$ uniformly in $\re \theta_1$, we have that the absolute values of all eigenvalues of $\Delta(\theta)$ approach $+\infty$ as $|\im\theta_1|\to +\infty$ uniformly in $\re\theta_1$. Since $\Delta(\theta)$ is a normal operator and $h(\theta)-E$ is a bounded perturbation of $\Delta(\theta)$, the same holds for the eigenvalues of $h(\theta)-E$, and therefore for the determinant.
\end{proof}
\begin{prop}
\label{prop_degenerate_root}
Suppose that $E_j(\theta)=E^{\pm}_j$. Fix $\theta_2,\ldots,\theta_d$ and consider the function 
\beq
\label{eq_function_f}
f(\eta)=\det (h(\eta,\theta_2,\ldots,\theta_d)-E_j^{\pm})
\eeq 
as an analytic function of $\eta$. Then $\eta=\theta_1$ is a zero of $f$ of multiplicity at least two. A similar statement holds if one considers $f$ as a Laurent polynomial in $z=e^{2\pi i \eta}$.
\end{prop}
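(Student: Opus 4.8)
The plan is to reduce the statement to a computation of the order of vanishing of $f$ at $\eta=\theta_1$, and to organize that computation around the real-analytic eigenvalue branches of the fibre operator rather than around the determinant itself. First I would record that, since $\theta$ is a genuine band edge, the point $\theta$ lies in $\Omega'\subset\R^d$, so the fixed coordinates $\theta_2,\dots,\theta_d$ are real; by the explicit formula \eqref{eq_hhat_def} (and the reality of $V$), the matrix $h(\eta,\theta_2,\dots,\theta_d)$ is then Hermitian for every real $\eta$. Thus we are dealing with a one-parameter self-adjoint holomorphic family, and Rellich's theorem applies: on a complex neighborhood of the real axis there exist holomorphic functions $\mu_1(\eta),\dots,\mu_N(\eta)$, real-valued for real $\eta$, whose values are exactly the eigenvalues of $h(\eta,\theta_2,\dots,\theta_d)$. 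Expanding the determinant in these branches yields the key factorization
$$
f(\eta)=\prod_{l=1}^N\bigl(\mu_l(\eta)-E_j^{\pm}\bigr),
$$
valid as an identity of holomorphic functions near $\eta=\theta_1$ (the two sides agree on a real interval and both are holomorphic there). The order of vanishing of $f$ at $\theta_1$ is therefore the sum of the orders of vanishing of the individual factors.

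Let $m$ be the multiplicity of $E_j^{\pm}$ as an eigenvalue of $h(\theta)$; equivalently, exactly $m$ of the branches satisfy $\mu_l(\theta_1)=E_j^{\pm}$, and each such factor vanishes at $\theta_1$ to order at least one. If $m\ge 2$, the product already vanishes to order at least $m\ge 2$, and the conclusion is immediate with no further work. The only case requiring a genuine argument is $m=1$.

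In the case $m=1$ there is a single branch, say $\mu$, with $\mu(\theta_1)=E_j^{\pm}$, and $f(\eta)=\bigl(\mu(\eta)-E_j^{\pm}\bigr)\,g(\eta)$ with $g(\theta_1)\ne 0$. Since the eigenvalue is simple and isolated at $\theta_1$, for real $\eta$ near $\theta_1$ the branch $\mu(\eta)$ coincides with the ordered band function $\eta\mapsto E_j(\eta,\theta_2,\dots,\theta_d)$. By the definition of $E_j^{\pm}$ as the global extremum of $E_j$ over $\Omega'$, this restriction attains a local extremum at $\eta=\theta_1$; as $\mu$ is real-analytic, its derivative vanishes there, so $\mu(\eta)-E_j^{\pm}$ vanishes to order at least two. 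Hence $f$ has a zero of multiplicity at least two at $\theta_1$. Finally, since $z=e^{2\pi i\eta}$ is a local biholomorphism near $\eta=\theta_1$, the order of vanishing is preserved, which gives the corresponding assertion for $f$ viewed as a Laurent polynomial in $z$.

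I expect the main obstacle to be conceptual rather than computational: one must resist reading off the conclusion from the multiplicity of $E_j^{\pm}$ as a root in the variable $E$, since that controls $E$-derivatives of the determinant and not the $\eta$-derivatives we actually need. The correct bookkeeping is through the analytic branches $\mu_l$, where the degenerate case $m\ge 2$ becomes automatic and all the genuine content is concentrated in the simple case $m=1$, whose resolution is precisely the elementary fact that a differentiable function has vanishing derivative at a local extremum.
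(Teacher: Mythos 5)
Your proof is correct, but it follows a genuinely different route from the paper's. The paper perturbs the spectral parameter rather than factoring the determinant: by Proposition \ref{prop_ac}, the band function $\eta\mapsto E_j(\eta,\theta')$, $\theta'=(\theta_2,\ldots,\theta_d)$, is not constant on any interval, and since it attains its extremal value $E_j^{\pm}$ at $\eta=\theta_1$, the equation $E_j(\eta,\theta')=E_j^{\pm}\mp\delta$ has at least two real solutions near $\theta_1$ for small $\delta>0$; by continuity of the roots of the analytic function $\eta\mapsto\det(h(\eta,\theta')-E)$ in the parameter $E$, the root at $\eta=\theta_1$ splits into two distinct roots under this perturbation and hence has multiplicity at least two. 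You instead factor $f(\eta)=\prod_{l}\bigl(\mu_l(\eta)-E_j^{\pm}\bigr)$ via Rellich's theorem for the Hermitian holomorphic family $\eta\mapsto h(\eta,\theta')$ and split into the cases $m\ge 2$ (automatic) and $m=1$ (Fermat's rule: the analytic branch coincides locally with the ordered band function, which has an extremum at $\theta_1$, so its derivative vanishes there). The paper's argument is more elementary — only Hurwitz-type stability of roots, no analytic perturbation theory — and needs no case distinction in $m$; your route buys a slightly sharper conclusion (the order of vanishing is at least $\max(m,2)$) and isolates exactly where the band-edge hypothesis enters. One point you should make explicit: you never invoke Proposition \ref{prop_ac}, yet you need it to know that $f\not\equiv 0$, so that the multiplicity of the zero is finite and well defined — if one of the Rellich branches were identically equal to $E_j^{\pm}$, your factorization would force $f\equiv 0$ and the notion of multiplicity would degenerate. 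Since Proposition \ref{prop_ac} is available (the paper cites it at the corresponding step of its own proof), this is a one-line addition, not a gap.
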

\begin{proof}
The argument repeats a similar argument from \cite{FK_acta}. Fix $\theta'=(\theta_2,\ldots\theta_d)$ and consider $E_j(\cdot,\theta')$ as a function of its first argument. From Proposition \ref{prop_ac}, that function cannot be constant in $\theta_1$ on any interval. Since it attains its maximal or minimal value at $E_j^{\pm}$, the equation $E_j(\eta,\theta')=E_{j}^{\pm}\mp\delta$, considered as an equation in $\eta\in \R$, has at least two solutions near $\theta_1$ for sufficiently small $\delta>0$. As a consequence, the root $\eta=\theta_1$ of the function $f$ \eqref{eq_function_f} splits into two distinct roots after a small perturbation of the parameter $E_j^{\pm}$, and as a consequence is of multiplicity at least two.
\end{proof}
Clearly, any Laurent polynomial can be multiplied by an appropriate power of $z$ and made a regular polynomial (with no additional roots). Therefore, the condition of having roots of multiplicity two or higher is an analytic condition which can be formulated in terms of its discriminant. Recall that, for a monic polynomial
$$
p(z)=z^n+a_{n-1}z^{n-1}+\ldots+a_0
$$
with roots $z_1,\ldots,z_n$, its {\it discriminant} is defined as
$$
\Delta(p)=\prod_{1\le i<j\le n}(z_i-z_j)^2.
$$
It is clear that $\Delta(p)$ vanishes if and only if $p$ has roots
of multiplicity greater than or equal to 2. It is well known (see,
for example, \cite[Section 5.9]{WW}) that $\Delta(p)$ is a
polynomial function of the coefficients $a_0,\ldots,a_{n-1}$. 
\begin{prop}
\label{prop_discriminant}
Fix $E$, $(\theta_2,\ldots,\theta_d)$, and fix $\mu_2,\ldots,\mu_d\in \C$. Consider the following family of functions of $\theta_1$:
$$
\det(h(\cdot,\theta_2+t\mu_2,\theta_3+t\mu_3,\ldots,\theta_d+t \mu_d)-E),\quad t\in \C.
$$
Then the set of $t\in \C$ for which the function in the left hand side has a root of multiplicity $\ge 2$, is discrete in $\C$ or is equal to the whole $\C$.
\end{prop}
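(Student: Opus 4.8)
The plan is to reduce the statement to the elementary dichotomy for zeros of an entire function of one complex variable. As recalled before the proposition, for fixed $\theta_2,\ldots,\theta_d$ and $E$ the quantity $\det(h(\theta)-E)$ is a Laurent polynomial in $z=e^{2\pi i\theta_1}$, because the only $\theta_1$-dependence enters through the diagonal terms $F(\theta+b)$ via $2\cos(2\pi(\theta_1+b_1))=ze^{2\pi i b_1}+z^{-1}e^{-2\pi i b_1}$, while the off-diagonal entries $\hat V(b-c)$ in \eqref{eq_hhat_def} do not depend on $z$. Expanding the determinant over permutations, the top power $z^N$ is produced only by the identity permutation, which picks the $z$-part from each diagonal entry (any other permutation uses at least one $z$-free off-diagonal entry and so contributes a strictly lower power); hence the coefficient of $z^N$ equals $\prod_{b\in\Lambda'}e^{2\pi i b_1}$, a nonzero constant that does not depend on any of the parameters. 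Likewise the coefficient of $z^{-N}$ is the nonzero constant $\prod_{b\in\Lambda'}e^{-2\pi i b_1}$.

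First I would set, for each $t\in\C$,
$$
p_t(z)=z^N\det\bigl(h(\cdot,\theta_2+t\mu_2,\ldots,\theta_d+t\mu_d)-E\bigr),
$$
which by the previous paragraph is an honest polynomial in $z$ of degree exactly $2N$, with nonzero constant leading coefficient and nonzero constant term, uniformly in $t$. Multiplying by $z^N$ introduces no new roots (since $z=0$ is not a root) and changes no multiplicities, so $p_t$ has a root of multiplicity $\ge 2$ precisely when the function in the statement has a root of multiplicity $\ge 2$ in $\theta_1$ (equivalently in $z$).

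The key point is that the coefficients of $p_t$ are entire functions of $t$: each matrix entry depends on $t$ through $\cos(2\pi(\theta_j+t\mu_j))$, which is entire in $t$, and the determinant is a polynomial in these entries. Since the leading coefficient of $p_t$ is a nonzero constant, I can normalize $p_t$ to be monic without spoiling analyticity in $t$, and then the discriminant $D(t):=\Delta(p_t)$ is a fixed polynomial in the (entire) coefficients of $p_t$, hence an entire function of $t$. By the properties of the discriminant recalled above, $D(t)=0$ if and only if $p_t$ has a root of multiplicity $\ge 2$.

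Finally, the conclusion follows from the standard fact that the zero set of an entire function of one variable is either discrete in $\C$ or all of $\C$ (the latter occurring exactly when the function vanishes identically). Thus the set of $t$ for which $p_t$ — equivalently the function in the statement — has a multiple root is either discrete or equal to $\C$. The only point requiring genuine care is the uniform control of the degree in $z$: one must verify that the extreme coefficients of the Laurent polynomial are nonzero constants independent of $t$, for otherwise the degree could drop at special values of $t$ and the single polynomial discriminant formula would no longer apply uniformly across all $t$. This is exactly what the permutation-expansion argument in the first paragraph provides.
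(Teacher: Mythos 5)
Your proof is correct and follows essentially the same route as the paper: write the determinant as a Laurent polynomial in $z=e^{2\pi i\theta_1}$, pass to a polynomial, and apply the discriminant together with the discrete-or-everything dichotomy for zero sets of entire functions of $t$. In fact your permutation-expansion computation showing that the extreme coefficients are the nonzero constants $\prod_{b\in\Lambda'}e^{\pm 2\pi i b_1}$ slightly strengthens the paper's argument, which instead allows the lowest coefficient to vanish and disposes of the resulting spurious root at $z=0$ by noting that this coefficient is analytic in $t$ and hence vanishes only on a discrete set.
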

\begin{proof}
Rewrite the left hand side as a Laurent polynomial in $z$ and multiply to the appropriate power of $z$. Note that, since the coefficient at the lowest power of $z$ may sometimes vanish, this multiplication may result in adding a degenerate root $z=0$. However, since the said coefficient is analytic in $t$, this itself can only happen for a discrete set of values of $t$. Modulo this remark, the set of $t$ under consideration is a zero set of the discriminant of some polynomial whose coefficients are analytic in $t$, from which the claim of the proposition follows.
\end{proof}
\begin{lemma}
\label{lemma_separation}
Fix a lattice $\Gamma\subset\Z^d$. Then there exists a subset $S\subset \T^{d-1}$, $\dim S\le d-2$, such that the following is true. For any vector $u=(\ep_2,\ldots,\ep_{d})\in \{-1,1\}^{d-1}$ such that $\Gamma'$ does not contain vectors of the form $\frac{1}{p}(\pm1,\ep_2,\ldots,\ep_d)$ with integer $p\ge 2$ and for any $\theta'=(\theta_2,\ldots,\theta_d)\in \T^{d-1}\setminus S$, the following holds. One can find $t_0=t_0(\theta')>0$ such that, for all $t\ge t_0$, if for some $\theta_1\in \mathbb C$ we have 
$$
|2\cos 2\pi \theta_1 +F(\theta'-it u)|\le e^{\pi t},
$$
then, for any $b=(b_1,b')\in \Lambda'\setminus\{0\}$, we have
$$
|2\cos 2\pi (\theta_1+b_1)+F(\theta'+b'-it u)|> e^{\pi t}.
$$
\end{lemma}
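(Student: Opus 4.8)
The plan is to push $t\to+\infty$, compute the leading exponential behaviour of everything in sight, and reduce the non-degeneracy to the arithmetic hypothesis on $\Gamma'$. Writing $u=(\ep_2,\dots,\ep_d)$ and using $2\cos2\pi(\theta_j-it\ep_j)=e^{2\pi t}e^{2\pi i\ep_j\theta_j}+e^{-2\pi t}e^{-2\pi i\ep_j\theta_j}$, I would first record
$$
F(\theta'-itu)=e^{2\pi t}\,w+e^{-2\pi t}\,\overline{w},\qquad w=w(\theta',u):=\sum_{j=2}^d e^{2\pi i\ep_j\theta_j}.
$$
Since $w\not\equiv 0$ (it equals $d-1$ at $\theta'=0$), the set $\{\theta'\colon w(\theta',u)=0\}$ is a proper real-analytic subset of $\T^{d-1}$, hence of dimension $\le d-2$, and I would place it into the exceptional set $S$ for each of the finitely many $u$. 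For $\theta'\notin S$ we have $w\ne 0$, so the hypothesis $|2\cos2\pi\theta_1+F(\theta'-itu)|\le e^{\pi t}$ forces $2\cos2\pi\theta_1=-e^{2\pi t}w+O(e^{\pi t})$, a quantity of magnitude $\asymp|w|e^{2\pi t}$. Writing $p=e^{2\pi i\theta_1}$ so that $2\cos2\pi\theta_1=p+p^{-1}$, this pins $|\im\theta_1|$ to within a bounded distance of $t$ with a \emph{definite sign}: either $|p|\asymp e^{-2\pi t}$ (case $+$) or $|p^{-1}|\asymp e^{-2\pi t}$ (case $-$).

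Next I would transfer this to the shifted expression. The exact identity $2\cos2\pi(\theta_1+b_1)=2\cos2\pi\theta_1\cdot e^{-2\pi ib_1}+2ip\sin2\pi b_1$ gives, in case $+$,
$$
2\cos2\pi(\theta_1+b_1)=2\cos2\pi\theta_1\cdot e^{-2\pi ib_1}+O(e^{-2\pi t}),
$$
and symmetrically with $e^{+2\pi ib_1}$ (and $O(|p^{-1}|)$) in case $-$. Substituting the pinned value of $2\cos2\pi\theta_1$ and using $F(\theta'+b'-itu)=e^{2\pi t}w_b+e^{-2\pi t}\overline{w_b}$ with $w_b:=\sum_{j=2}^d e^{2\pi i\ep_j(\theta_j+b_j)}$, all the $e^{2\pi t}$ terms collect into
$$
2\cos2\pi(\theta_1+b_1)+F(\theta'+b'-itu)=e^{2\pi t}\bigl(w_b-w\,e^{\mp2\pi ib_1}\bigr)+O(e^{\pi t}).
$$
Thus the lemma reduces to showing that the coefficient $w_b-w\,e^{\mp2\pi ib_1}$ stays bounded away from $0$ for $\theta'\notin S$.

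The heart of the matter is the non-degeneracy of that coefficient, which I would get from linear independence of characters combined with the arithmetic hypothesis. Writing
$$
w_b-w\,e^{\mp2\pi ib_1}=\sum_{j=2}^d\bigl(e^{2\pi i\ep_j b_j}-e^{\mp2\pi ib_1}\bigr)\,e^{2\pi i\ep_j\theta_j},
$$
the maps $\theta'\mapsto e^{2\pi i\ep_j\theta_j}$ are distinct characters of $\T^{d-1}$, so this vanishes identically in $\theta'$ iff $e^{2\pi i\ep_j b_j}=e^{\mp2\pi ib_1}$ for all $j$, i.e. $b\equiv b_1(1,\mp\ep_2,\dots,\mp\ep_d)\ (\mathrm{mod}\ \Z^d)$. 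For $b\in\Lambda'\setminus\{0\}$ this forces $b_1=k/p$ with $p\ge2$ and $\gcd(k,p)=1$; choosing $m$ with $mk\equiv1\ (\mathrm{mod}\ p)$ and using $\Z^d\subset\Gamma'$, one gets $\tfrac1p(1,\mp\ep_2,\dots,\mp\ep_d)\in\Gamma'$, and since $\Gamma'=-\Gamma'$ this is a vector of the forbidden form $\tfrac1p(\pm1,\ep_2,\dots,\ep_d)$, contradicting the assumption on $u$. Hence each coefficient is a not-identically-zero real-analytic function of $\theta'$, and I would take $S$ to be the union, over the finitely many $u$, the finitely many $b\in\Lambda'\setminus\{0\}$, and the two signs, of their zero sets, together with the sets $\{w=0\}$; this $S$ is a finite union of proper real-analytic subsets, so $\dim S\le d-2$. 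For $\theta'\notin S$ the coefficient is bounded below by some $\delta(\theta')>0$, whence $|e^{2\pi t}(w_b-we^{\mp2\pi ib_1})+O(e^{\pi t})|\ge\delta e^{2\pi t}-Ce^{\pi t}>e^{\pi t}$ once $t\ge t_0(\theta')$, uniformly over the finitely many $b$.

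I expect the main obstacle to be the asymptotic bookkeeping of the second step: one must confirm that $\theta_1$ genuinely has imaginary part of a single definite sign (so that exactly one of $e^{\pm2\pi ib_1}$ appears) and that every subleading correction — from the $O(e^{\pi t})$ slack in the hypothesis, from the second exponential in $F$, and from the $2ip^{\pm1}\sin2\pi b_1$ term — is truly negligible against the surviving $e^{2\pi t}$ coefficient, uniformly in $b$. By contrast, the arithmetic translation in the last step, turning ``coefficient identically zero'' into ``forbidden vector in $\Gamma'$,'' is clean, and the bound $\dim S\le d-2$ is automatic because each constituent of $S$ is the zero set of a not-identically-zero analytic function on the connected manifold $\T^{d-1}$.
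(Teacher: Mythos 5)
Your proposal is correct and takes essentially the same approach as the paper: both arguments extract the leading-order coefficient $e^{2\pi t}\bigl(w_b-w\,e^{\mp 2\pi i b_1}\bigr)$ as $t\to+\infty$ (the paper reaches the identical character identity \eqref{eq_more_theta_restriction} by multiplying the shifted relation by $e^{2\pi i \ep b_1}$ and subtracting), take $S$ to be the union of the zero sets of these finitely many not-identically-zero character sums, and convert identical vanishing into the forbidden inclusion $\tfrac1p(\pm1,\ep_2,\ldots,\ep_d)\in\Gamma'$ using $\Z^d\subset\Gamma'$. The only cosmetic differences are that you argue directly with exact identities and an explicit $t_0(\theta')$, at the cost of adding the harmless set $\{w=0\}$ to $S$ (the paper's subtraction trick cancels the $\cos 2\pi\theta_1$ terms without pinning $\theta_1$, so it never needs $w\neq 0$), whereas the paper argues by contradiction along a sequence $t_j\to+\infty$.
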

\begin{proof}
Since there are at most $2^{d-1}$ possible choices for a vector $u$ with the stated properties, we can without loss of generality assume that $u\in \{-1,1\}^{d-1}$ is fixed. It will also be convenient to use $O$-notation in some estimates. One can check that the constants in these estimates are bounded by, say, $2d$. The following observation will be important: if $x,y\in \R$ and $\ep=\sign(y)$, then
\beq
\label{eq_cosine_full}
\cos (x+iy)=\frac12 e^{|y|}e^{- i\ep x}+O(e^{-|y|}).
\eeq
Suppose, by contradiction, that there exists a sequence $t_j\to +\infty$ and a vector $b=(b_1,b')\in \Lambda'$, $b\neq 0$, such that, for some $\theta_1=\theta_1(j)$ we have
\beq
\label{eq_eigen_close1}
|2\cos 2\pi \theta_1+F(\theta'-it_j u)|\le e^{\pi t_j},
\eeq
\beq
\label{eq_eigen_close2}
|2\cos 2\pi (\theta_1+b_1)+F(b'+\theta'-it_j u)|\le e^{\pi t_j};
\eeq
Assume first that $\theta_1=x+iy$, where $x,y\in \R$. Using \eqref{eq_cosine_full}, we have
\beq
\label{eq_eigen_close3}
F(\theta'-it_j u)=e^{2\pi t_j}\{e^{2\pi i \ep_2 \theta_2}+\ldots+e^{2\pi i \ep_d \theta_d}\}+O(e^{-2\pi t_j}),
\eeq
\beq
\label{eq_eigen_close4}
F(\theta'+b'-it_j u)=e^{2\pi t_j}\{e^{2\pi i \ep_2  (\theta_2+b_2)}+\ldots+e^{2\pi i \ep_d (\theta_d+b_d)}\}+O(e^{-2\pi t_j}).
\eeq
Let $\ep=\sign(y)$. Then, by combining \eqref{eq_eigen_close1}, \eqref{eq_eigen_close2}, and \eqref{eq_cosine_full}, we have
$$
e^{2\pi|y|}e^{-2\pi i\ep x}+e^{2\pi t_j}\{e^{2\pi i \ep_2 \theta_2}+\ldots+e^{2\pi i \ep_d \theta_d}\}=O(e^{\pi t_j}),
$$
$$
e^{2\pi |y|}e^{-2\pi i\ep (x+b_1)}+e^{2\pi t_j}\{e^{2\pi i \ep_2(\theta_2+b_2)}+\ldots+e^{2\pi i \ep_d (\theta_d+b_d)}\}=O(e^{\pi t_j}).
$$
Multiply the second equality by $e^{2\pi i \ep b_1}$ and subtract one from another. We obtain
$$
e^{2\pi t_j}\{e^{2\pi i \ep_2 \theta_2}+\ldots+e^{2\pi i \ep_d \theta_d}\}-e^{2\pi t_j}e^{2\pi i \ep b_1}\{e^{2\pi i \ep_2 (\theta_2+b_2)}+\ldots+e^{2\pi i \ep_d (\theta_d+b_d)}\}=O(e^{\pi t_j})
$$
as $t_j\to +\infty$. Clearly, this can only happen if 
\beq
\label{eq_more_theta_restriction}
e^{2\pi i \ep_2 \theta_2}+\ldots+e^{2\pi i \ep_d \theta_d}=e^{2\pi i \ep b_1} e^{2\pi i\ep_2 (\theta_2+b_2)}+\ldots+e^{2\pi i\ep b_1} e^{2\pi i \ep_d (\theta_d+b_d)}.
\eeq
As a consequence, if
$$
\theta'\notin S:=\{\theta'\in \T^{d-1}\colon \eqref{eq_more_theta_restriction}\text{ holds for some }b=(b_1,b')\in \Lambda'\setminus \{0\}\text { and some }\ep\in\{1,-1\}\},
$$
then the conclusion of the lemma is true. Due to the definition of $S$, we have $\dim S\le d-2$, unless \eqref{eq_more_theta_restriction} is satisfied for all $\theta'\in \T^{d-1}$ for some fixed $\ep$ and $b$. The latter, however, would imply
$$
-\ep b_1=\ep_2 b_2=\ep_3 b_3=\ldots=\ep_d b_d,
$$
or, equivalently, $b_1(-\ep,\ep_2,\ep_3,\ldots,\ep_d)\in \Gamma'$. Together with the inclusion $\Gamma'\supset\Z^d$, we arrive to $p^{-1}(-\ep,\ep_2,\ep_3,\ldots,\ep_d)\in \Gamma'$, which contradicts the assumption on $\Gamma$.
\end{proof}
\begin{cor}
\label{cor_separation}
Under the assumptions of Lemma $\ref{lemma_separation}$, suppose that, in addition,
$$
\theta'\notin\tilde{S}:=\left(\bigcup_{b=(b_1,b')\in \Lambda'}(S+b')\right)\cup \left(\bigcup_{b=(b_1,b')\in \Lambda'}\{\theta'\in\T^{d-1}\colon e^{2\pi i \ep_2(\theta_2+b_2)}+\ldots+e^{2\pi i \ep_d (\theta_d+b_d)}=0\}\right).
$$
Then
$$
|F(\theta'+b'-itu)|\ge e^{3\pi t/2},\,\,\forall t\ge t_0(\theta'),\,\,\forall b=(b_1,b')\in \Lambda'.
$$
Moreover, for any $\theta_1\in \C$, the inequality
$$
|2\cos 2\pi(\theta_1+b_1)+F(\theta'+b'-itu)|\le e^{\pi t}
$$
may hold for at most one choice of $b=(b_1,b')\in \Lambda'$.
\end{cor}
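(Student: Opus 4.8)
The plan is to derive both assertions directly from Lemma~\ref{lemma_separation} and the asymptotic expansion \eqref{eq_cosine_full}; the only new work is bookkeeping of the two exceptional sets in the definition of $\tilde S$ and of the threshold $t_0(\theta')$, so I do not expect any genuinely new estimate to be required.

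For the first assertion I would substitute $\theta'+b'$ into the expansion already used for \eqref{eq_eigen_close4}, obtaining
$$
F(\theta'+b'-itu)=e^{2\pi t}\bigl\{e^{2\pi i\ep_2(\theta_2+b_2)}+\ldots+e^{2\pi i\ep_d(\theta_d+b_d)}\bigr\}+O(e^{-2\pi t}).
$$
The condition $\theta'\notin\tilde S$, through the second group of sets in the definition of $\tilde S$, rules out the vanishing of the bracketed sum for every $b=(b_1,b')\in\Lambda'$. Since $\Lambda'$ is finite, these brackets are bounded below in modulus by a single constant $\delta=\delta(\theta')>0$, so $|F(\theta'+b'-itu)|\ge \delta e^{2\pi t}-O(e^{-2\pi t})$. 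Because $\delta e^{2\pi t}$ dominates $e^{3\pi t/2}$ (their ratio is $\delta e^{\pi t/2}\to\infty$), enlarging $t_0(\theta')$ if necessary yields $|F(\theta'+b'-itu)|\ge e^{3\pi t/2}$ for all $t\ge t_0(\theta')$.

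For the second assertion I would argue by contradiction: suppose the displayed inequality holds for two distinct $b,\tilde b\in\Lambda'$. The key point is that the first group of sets in $\tilde S$, namely $\bigcup_b(S+b')$, guarantees $\theta'+b'\notin S$ for every $b$; hence Lemma~\ref{lemma_separation} applies with base point $\theta'+b'$ in place of $\theta'$ and with $\theta_1+b_1$ in place of $\theta_1$, its hypothesis being exactly the assumed inequality for $b$. Feeding the nonzero vector $c=\tilde b-b\in\Lambda'\setminus\{0\}$ into the conclusion of the lemma, and noting that the shifted base point becomes $\theta'+\tilde b'$ and the shifted first coordinate becomes $\theta_1+\tilde b_1$, I obtain $|2\cos 2\pi(\theta_1+\tilde b_1)+F(\theta'+\tilde b'-itu)|>e^{\pi t}$, contradicting the assumed inequality for $\tilde b$.

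The step requiring the most care is the uniform choice of $t_0(\theta')$, since the lemma only supplies a threshold attached to its particular base point. I would therefore set $t_0(\theta')$ equal to the maximum of the finitely many thresholds $t_0(\theta'+b')$ over $b\in\Lambda'$, together with the threshold arising in the first assertion; finiteness of $\Lambda'$ makes this harmless. This is the only place where one must track how the translates $\theta'+b'$ interact with $S$, which is precisely what motivated including $\bigcup_b(S+b')$ in $\tilde S$.
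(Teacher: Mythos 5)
Your proposal is correct and takes essentially the same route as the paper, whose proof is just the compressed version of yours: the first claim from the definition of $\tilde S$ together with the expansion \eqref{eq_eigen_close3}, and the second from Lemma \ref{lemma_separation} applied at the translated base points $\theta'+b'$ for all $b\in\Lambda'$, which is precisely your contradiction argument with $c=\tilde b-b\in\Lambda'\setminus\{0\}$. Your added bookkeeping (the uniform threshold $t_0(\theta')$ as a maximum over the finitely many translates, and the remark that $\theta'\notin\bigcup_{b}(S+b')$ gives $\theta'+b'\notin S$, which uses that $\Lambda'$ is closed under negation) is sound and fills in exactly what the paper leaves implicit.
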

\begin{proof}
The first claim follows from the definition of $\tilde{S}$ and \eqref{eq_eigen_close3}. The second claim follows from Lemma \ref{lemma_separation} applied to $\theta+b$ for all possible $b\in \Lambda'$.
\end{proof}
\begin{lemma}
\label{lemma_determinant}
Under the assumptions of the previous lemma and corollary, fix some $\theta'\in \T^{d-1}\setminus \tilde{S}$ and $u\in \{-1,1\}^{d-1}$. For $t>0$, consider the following $1$-periodic subset of $\C$:
$$
\Theta_t=\{\theta_1\in \C\colon |2\cos 2\pi (\theta_1+b_1)+F(\theta'+b'-itu)|\ge e^{\pi t/2},\,\forall b=(b_1,b')\in \Lambda'\}.
$$
There exists $t_0=t_0(\theta')$ such that, for $t>t_0$, every connected component of $\C\setminus\Theta_t$ is bounded and contains at most one root of the equation
\beq
\label{eq_cor_determinant}
\det(\Delta(\eta,\theta'-itu))=0,
\eeq
considered as an equation on $\eta\in \C$, and that root is simple.
\end{lemma}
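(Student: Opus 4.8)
The plan is to compute the determinant in closed form and reduce the whole statement to a one-variable analysis of its factors, controlled by the separation already established in Corollary \ref{cor_separation}. In the Bloch basis $\{e_{(\eta,\theta')+b}\colon b\in\Lambda'\}$ the free operator $\Delta(\eta,\theta'-itu)$ is diagonal, so computing the determinant directly gives
$$
\det\bigl(\Delta(\eta,\theta'-itu)\bigr)=\prod_{b=(b_1,b')\in\Lambda'}g_b(\eta),\qquad g_b(\eta):=2\cos2\pi(\eta+b_1)+F(\theta'+b'-itu).
$$
Thus the zeros of the determinant are \emph{exactly} the union over $b$ of the zeros of the factors $g_b$, and $\C\setminus\Theta_t=\bigcup_{b\in\Lambda'}U_b$ with $U_b:=\{\eta\colon|g_b(\eta)|<e^{\pi t/2}\}$. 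Since $e^{\pi t/2}<e^{\pi t}$, the second claim of Corollary \ref{cor_separation} shows that for $t\ge t_0(\theta')$ the sets $U_b$ are pairwise disjoint; hence every connected component of $\C\setminus\Theta_t$ lies inside a single $U_b$, and it suffices to analyze one factor at a time.

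Next I would fix $b$ and set $c:=-\tfrac12 F(\theta'+b'-itu)$, so that $g_b(\eta)=0$ is equivalent to $\cos2\pi(\eta+b_1)=c$. The first claim of Corollary \ref{cor_separation} gives $|c|\ge\tfrac12 e^{3\pi t/2}$, so the roots $\eta_0$ have large imaginary part: within each unit period in $\re\eta$ there are exactly two, at $\im\eta_0\approx\pm(2\pi)^{-1}\log(2|c|)$, hence with $|\im\eta_0|\gtrsim 3t/4$ and therefore separated by a distance $\gtrsim 3t/2$. At any such root the derivative satisfies $|g_b'(\eta_0)|=4\pi\,|{\sin 2\pi(\eta_0+b_1)}|=4\pi\,|c^2-1|^{1/2}\gtrsim e^{3\pi t/2}$, so each root is simple. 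The same lower bound on $|2\cos2\pi(\eta+b_1)|$ for large $|\im\eta|$, combined with the upper bound $|F(\theta'+b'-itu)|=O(e^{2\pi t})$ and the $1$-periodicity in $\re\eta$, confines $U_b$ to a bounded horizontal strip $\{|\im\eta|\lesssim t\}$, which forces each of its components to be bounded.

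The decisive step is to show that near each root $\eta_0$ the sublevel set $U_b$ is a single tiny disk containing that one root and nothing else. Because $|g_b'(\eta_0)|\gtrsim e^{3\pi t/2}$ while the threshold is only $e^{\pi t/2}$, the linearization $g_b(\eta)\approx g_b'(\eta_0)(\eta-\eta_0)$ suggests the relevant component has radius of order $e^{\pi t/2}/|g_b'(\eta_0)|=O(e^{-\pi t})$. I would make this rigorous by a Rouch\'e comparison of $g_b$ with its linear part on a circle $|\eta-\eta_0|=r$ of intermediate radius (say $r=e^{-3\pi t/4}$): the quadratic remainder is dominated there by the linear term, so $g_b$ has exactly one zero inside, and moreover $|g_b|\ge e^{\pi t/2}$ on the circle, which simultaneously bounds and isolates the component. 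The main obstacle is precisely this quantitative localization—ruling out that $U_b$ leaks out to join two distinct roots (of the same factor, $\gtrsim 3t/2$ apart, or in adjacent periods, $\ge 1$ apart) or escapes to infinity; all of these are excluded once $t$ is large enough that the disk radius $O(e^{-\pi t})$ is far below the inter-root spacing. Finally, on such a component only $g_b$ is small, so $\det\bigl(\Delta(\eta,\theta'-itu)\bigr)=g_b(\eta)\prod_{c\neq b}g_c(\eta)$ with the product non-vanishing; hence the order of the determinant's root at $\eta_0$ equals that of $g_b$, namely one. Combining the diagonal factorization, the disjointness of the $U_b$, and this one-root-per-disk analysis yields that for $t>t_0(\theta')$ every connected component of $\C\setminus\Theta_t$ is bounded and carries exactly one, simple, root of $\det(\Delta(\eta,\theta'-itu))=0$.
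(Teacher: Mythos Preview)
Your overall strategy is correct and differs from the paper's in the endgame: both start from the factorization $\det(\Delta)=\prod_b g_b$ and the disjointness of the $U_b:=\{|g_b|<e^{\pi t/2}\}$ via Corollary~\ref{cor_separation}, but you then localize each root by a derivative/Rouch\'e estimate, whereas the paper argues directly that no component can contain two points whose real parts differ by $1/2$ (the leading terms $\tfrac12 e^{2\pi|y|}e^{-2\pi ix}$ of $\cos 2\pi(\eta+b_1)$ at two such points have opposite phases, so both values cannot sit within $e^{\pi t/2}$ of the same $-F$), and then uses $\cos\alpha-\cos\beta=-2\sin\frac{\alpha-\beta}{2}\sin\frac{\alpha+\beta}{2}$ together with $\im(\eta_1+\eta_2)\neq 0$ and the width bound to force $\eta_1=\eta_2$. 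Your route yields sharper (exponentially small) bounds on the components; the paper's avoids Taylor remainders entirely.

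There is, however, a genuine gap in your boundedness step. The assertion that confinement to a horizontal strip together with $1$-periodicity ``forces each of its components to be bounded'' is false: any horizontal sub-strip is itself a $1$-periodic open subset of a strip with a single unbounded component. Your Rouch\'e disk only shows that the component \emph{through a given root} lies inside a small disk; it says nothing about a possible rootless component of $U_b$ stretching horizontally to infinity, and ``the disk radius is far below the inter-root spacing'' does not address that scenario. An easy repair: note $|g_b|\ge|F|-2\gtrsim e^{3\pi t/2}$ on $\R$, so every component lies entirely in $\{\im\eta>0\}$ or $\{\im\eta<0\}$; in the upper half, choose $a\in\R$ with $e^{-2\pi i(a+b_1)}=F/|F|$, so that on the line $\re\eta=a$ one has $|g_b(a+iy)|\ge e^{2\pi y}+|F|-1>e^{\pi t/2}$ for all $y>0$, and the $1$-periodic family of these lines confines $U_b\cap\{\im\eta>0\}$ to bounded boxes (similarly below). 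With boundedness secured, your Rouch\'e argument gives the remaining claims. The paper's width-$1/2$ trick achieves exactly this confinement in one stroke.
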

\begin{proof}
Choose a large $t_0$ so that the conclusion of Corollary \ref{cor_separation} holds for the given choice of $u$, and suppose that $t>t_0$. For these $t$ and every $\eta\in \C\setminus \Theta_t$, Lemma \ref{lemma_separation} implies that exactly one diagonal entry of $\Delta(\theta_1,\theta')$ is smaller than $e^{\pi t/2}$ in absolute value. In other words, for any $\eta\in \C\setminus\Theta_t$ there exists a unique $b=b(\eta)=(b_1,b')\in \Lambda'$ such that
\beq
\label{eq_all_gamma}
|2\cos 2\pi (\eta+b_1)+F(\theta'+b'-itu)|\le e^{\pi t/2}.
\eeq
Let $W$ be a connected component of $\C\setminus\Theta_t$. Clearly, $b(\eta)$ is constant on $W$. The first claim of Corollary \ref{cor_separation} implies, perhaps after choosing a larger $t_0$, that $W\cap \R=\varnothing$ and
\beq
\label{eq_cosine_w}
|2\cos2\pi(\eta+b_1)|\ge \frac12 e^{3\pi t/2},\quad \forall\eta\in W.
\eeq
We will use these estimates to obtain a bound on the size of $W$. Suppose that 
$$
\eta_1=x+i y_1\in W,\,\,\eta_2=x+1/2+i y_2\in W;\quad x,y_1,y_2\in \R.
$$
Since $W$ does not intersect the real line, we can assume without loss of generality that $y_1,y_2>0$. From \eqref{eq_cosine_full} and \eqref{eq_cosine_w}, it follows that
$$
|\cos 2\pi(\eta_1+b_1)-\cos 2\pi (\eta_2+b_1)|\ge e^{\pi t}.
$$
Together with the triangle inequality, this contradicts \eqref{eq_all_gamma} for large $t$. As a consequence, $W$ cannot contain two points whose real parts differ by $1/2$, and therefore is contained in a strip of width $1/2$. Due to $\eqref{eq_cosine_full}$ and the fact that $F(\theta'+b'-itu)$ does not depend on $\theta_1$, it is also clear that $W$ is contained in a horizontal strip, say, $|\Im\eta|\le 20t$, and therefore is bounded.

Assume that there are two roots $\eta_1,\eta_2$ of \eqref{eq_cor_determinant} in the same connected component of $\C\setminus \Theta_t$. In view of the above, we have
$$
0=\cos 2\pi(\eta_1+b_1)-\cos 2\pi(\eta_2+b_1)=-2\sin\pi(\eta_1-\eta_2)\sin\pi(\eta_1+\eta_2+b_1).
$$
Since $W\cap \R=\varnothing$ and $W$ is connected we have $\Im(\eta_1+\eta_2)\neq 0$, and therefore the second factor cannot vanish. Since $W$ is contained in a vertical strip of width $1/2$, the first factor can only vanish for $\eta_1=\eta_2$.
\end{proof}
\begin{cor}
\label{cor_main}
Under the assumptions of Lemma $\ref{lemma_separation}$, suppose that $\theta'\in \T^{d-1}\setminus \tilde S$, where $\tilde S$ is defined in Corollary $\ref{cor_separation}$, and $E\in \sigma(H)$. Then the set
$$
\{t\in \C\colon \det(h(\cdot,\theta'-itu)-E)=0\,\text{ has a degenerate root}\}
$$
is discrete in $\C$.
\end{cor}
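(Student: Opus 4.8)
The plan is to invoke Proposition \ref{prop_discriminant}, which tells us the set in question is either discrete in $\C$ or all of $\C$; it therefore suffices to exhibit one value of $t$ at which $\det(h(\cdot,\theta'-itu)-E)$ has only simple roots. I would take $t$ real and large, chosen in addition outside the discrete set (appearing in the proof of Proposition \ref{prop_discriminant}) on which the lowest coefficient of the Laurent polynomial in $z=e^{2\pi i\eta}$ vanishes, so that simplicity in $\eta$ is equivalent to simplicity in $z$ with no spurious root at $z=0$. The first step is localization of the roots. Working in the basis of \eqref{eq_hhat_def}, for $\eta\in\Theta_t$ every diagonal entry of $\hat h(\eta,\theta'-itu)-E$ has modulus at least $e^{\pi t/2}-|E|-\|V\|_{\infty}\ge\tfrac12 e^{\pi t/2}$, while the off-diagonal part consists of the $\eta$-independent, bounded coefficients $\hat V(b-c)$. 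Hence the matrix is invertible for large $t$, and all roots (finitely many by Proposition \ref{prop_ac}) lie in $\C\setminus\Theta_t$.

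Next I would argue inside a single connected component $W$ of $\C\setminus\Theta_t$. By Corollary \ref{cor_separation} there is a unique index $b_W=((b_W)_1,b_W')\in\Lambda'$ whose diagonal entry is small on $W$, while every other diagonal entry has modulus $>e^{\pi t}$. Reordering so that $b_W$ is the first basis vector, I write $\hat h(\eta,\theta'-itu)-E=\left(\begin{smallmatrix} g(\eta)&r^*\\ c&M(\eta)\end{smallmatrix}\right)$, where $g(\eta)=2\cos 2\pi(\eta+(b_W)_1)+A$ with $A$ independent of $\eta$, the vectors $r,c$ are $\eta$-independent and built from $\hat V$, and $M(\eta)$ is the large block. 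Since the diagonal of $M$ dominates its bounded off-diagonal part, $M$ is invertible on $W$ and the Schur complement gives $\det(\hat h-E)=\det M(\eta)\cdot\phi(\eta)$ with $\phi=g-r^*M^{-1}c$. As $\det M$ is non-vanishing and holomorphic on $W$, the multiplicity of any root of the determinant in $W$ equals that of $\phi$, so it suffices to prove $\phi'\neq 0$ on $W$.

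The analytic heart of the argument, and the step I expect to be the main obstacle, is this derivative estimate. First I would record a sharp a priori bound on $W$: the small entry forces $|2\cos 2\pi(\eta+(b_W)_1)|\le |F(\theta'+b_W'-itu)|+e^{\pi t/2}\le Ce^{2\pi t}$ by the expansion \eqref{eq_eigen_close3}, and the inequality $|\cos(X+iY)|\ge|\sinh Y|$ then yields $|\Im\eta|\le t+C$ throughout $W$. By Corollary \ref{cor_separation} the diagonal entries of $M$ have modulus $>e^{\pi t}-C$, so $\|M(\eta)^{-1}\|\le Ce^{-\pi t}$; since the $\eta$-derivatives of the entries of $M$ are multiples of $\sin 2\pi(\eta+b_1)$, hence $O(e^{2\pi t})$ by the bound on $\Im\eta$, I get $\|(M^{-1})'\|\le\|M^{-1}\|^2\|M'\|=O(1)$ and therefore $|(r^*M^{-1}c)'|=O(1)$. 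On the other hand \eqref{eq_cosine_w} gives $|\cos 2\pi(\eta+(b_W)_1)|\ge\tfrac14 e^{3\pi t/2}$ on $W$, and the elementary identity $|\sin(X+iY)|^2-|\cos(X+iY)|^2=-\cos 2X\ge-1$ turns this into $|g'(\eta)|=4\pi|\sin 2\pi(\eta+(b_W)_1)|\ge ce^{3\pi t/2}$. Comparing, $|\phi'|\ge ce^{3\pi t/2}-O(1)>0$ on $W$ for $t$ large. The delicate point here is the simultaneous use of the strong separation $|F(\theta'+b_W'-itu)|\ge e^{3\pi t/2}$ from Corollary \ref{cor_separation} (to bound $g'$ from below) and the smallness of the Schur-complement correction; this is exactly where the gap between the exponents $3\pi t/2$ and $2\pi t$ is spent.

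Finally, at any root $\eta_0\in W$ of the determinant we have $\phi(\eta_0)=0$, whence $\frac{d}{d\eta}\det(\hat h-E)\big|_{\eta_0}=\det M(\eta_0)\,\phi'(\eta_0)\neq 0$, so $\eta_0$ is a simple root. Since every root lies in some such $W$, all roots of $\det(h(\cdot,\theta'-itu)-E)$ are simple for this value of $t$; the associated polynomial in $z$ thus has nonzero discriminant, and Proposition \ref{prop_discriminant} forces the set of $t$ admitting a degenerate root to be discrete in $\C$, as claimed.
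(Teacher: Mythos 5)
Your proposal is correct, but it proves simplicity of the roots by a genuinely different mechanism than the paper. Both arguments share the same skeleton: reduce via Proposition \ref{prop_discriminant} to exhibiting a single good $t$, and use the separation estimates of Lemma \ref{lemma_separation} and Corollary \ref{cor_separation} to confine all roots to the connected components $W$ of $\C\setminus\Theta_t$, on each of which exactly one diagonal entry is small. At that point the paper goes soft: it interpolates $f_s=\det(\Delta(\cdot,\theta'-itu)+s(V-E))$, $s\in[0,1]$, checks non-vanishing on $\partial W$ (using $e^{\pi t/2}>2d+2\|V\|_\infty$), and invokes the argument principle to transport the zero count from the free case $s=0$ — where Lemma \ref{lemma_determinant} guarantees at most one simple root per component — to $s=1$; this requires the boundedness of $W$ established in Lemma \ref{lemma_determinant}. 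You instead factor $\det(\hat h-E)=\det M\cdot\phi$ by a Schur complement on each $W$ and prove $\phi'\neq 0$ there directly, playing the lower bound $|g'|\ge c\,e^{3\pi t/2}$ (which follows, as you say, from the first claim of Corollary \ref{cor_separation} via $|\sin|^2\ge|\cos|^2-1$) against the $O(1)$ bound on $(r^*M^{-1}c)'$ coming from $\|M^{-1}\|^2\|M'\|=O(e^{-2\pi t})\cdot O(e^{2\pi t})$; your estimate $|\Im\eta|\le t+C$ on $W$, needed for $\|M'\|=O(e^{2\pi t})$, is correctly derived from $|\cos(X+iY)|\ge|\sinh Y|$. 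Each route buys something: the paper's homotopy argument needs no derivative estimates and yields the stronger conclusion that each component contains at most one root, while your computation bypasses the argument principle entirely — you never need $W$ bounded, nor the at-most-one-root count of Lemma \ref{lemma_determinant} (only the geometry of $\Theta_t$ and bounds re-derivable from Corollary \ref{cor_separation}), since several roots per component are acceptable as long as each is simple, which is all the discriminant criterion requires. Your precaution about the spurious root $z=0$ is harmless but not needed: Proposition \ref{prop_discriminant} is stated for degenerate roots of the function itself, so exhibiting one $t$ with only simple roots already forces discreteness.
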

\begin{proof}
From Proposition \ref{prop_discriminant}, it is sufficient to produce a single value of $t\in \C$ such that all roots of the above equation are simple. Since $E\in \sigma(H)$, we have $|E|\le 2d+\|V\|_{\infty}$. Apply Lemma \ref{lemma_determinant} and choose $t$ satisfying $e^{\pi t/2}>2d+2\|V\|_{\infty}$ in addition to the choices already made. For these $t$, we have
$$
\det(\Delta(\theta_1,\theta'-itu)+s(V-E))\neq 0,\,\forall \theta_1\in \Theta_t,\,\forall s\in [0,1].
$$
Let $f_s(\eta):=\Delta(\eta,\theta'-itu)+s(V-E))$. Clearly, $f_s(\eta)$ is holomorphic on $\C$ and, for any connected component $W$ of $\C\setminus\Theta_t$, we have $|f_s(\eta)|>c>0$ on $\partial W$ (here we used the fact that $W$ is bounded from the previous lemma). Since $\partial W$ is a piecewise smooth Jordan curve, the number of zeros of $f_s$ in $W$ is equal to
$$
\frac{1}{2\pi i}\oint_{\partial W}\frac{f_s'(\eta)}{f_s(\eta)}\,d\eta
$$
and is continuous and therefore constant in $s$. By considering $s=0$ and using the previous lemma, we see that this integral can only be equal to $0$ or $1$. As a consequence, each root of the equation $\det(h(\theta_1,\theta'-itu)-E)=0$, considered as an equation on $\theta_1$, is simple.
\end{proof}
\subsection{Proof of Theorem \ref{th_main_edges}} Let $E=E_j^{\pm}$ and consider the level set
$$
L=\{\theta\in \R^d\colon E_j(\theta)=E\}.
$$
Assume that $\Gamma$ is not divisible. Without loss of generality (perhaps, after switching the roles of the components of $\theta$), one can assume that $\Gamma$ satisfies the assumptions of Lemma \ref{lemma_separation}. From Proposition \ref{prop_ac}, it also follows that the intersection of $L$ with any line of the form $\theta'=\mathrm{const}$ is finite. As a consequence, it would be sufficient to show that, under the assumptions of Lemma \ref{lemma_separation}, we have $\dim L'\le d-2$, where
$$
L'=\{\theta'\in \R^{d-1}\colon \exists \,\theta_1\in \R\,\text{ such that }\,(\theta_1,\theta')\in L\}
$$
is the projection of $L$ onto the $\theta'$ hyperplane.

Let $u\in \{-1,1\}^{d-1}$ be the vector from Lemma \ref{lemma_separation}, and consider the decomposition
$$
\theta'=su+w,\quad w\in W:=\R^{d-1}\cap \{u\}^{\perp},\quad s\in \R.
$$
From Proposition \ref{prop_discriminant}, it follows that $W=W_1\cup W_2$, where
$$
W_1=\{w\in W\colon \det(h(\cdot,w+su)-E)=0\,\text{ has a degenerate root for all }s\in \C\}
$$
and
$$
W_2=\{w\in W\colon \det(h(\cdot,w+su)-E)=0\,\text{ has a degenerate root for a discrete set of  }s\in \C\}.
$$
Clearly, the union is disjoint. Similarly, for the variable $\theta'$ we have $\R^{d-1}=U_1\cup U_2$, where
$$
U_1=W_1+\R u,\quad U_2 =W_2+\R u.
$$
We will show that $\dim U_1\le d-2$ and $\dim (L'\cap U_2)\le d-2$. 
From Corollary \ref{cor_main}, for any $w\in W_1$ and $s\in \R$, the vector $su+w$ cannot belong to the complement of $\tilde S$ (see Corollary \ref{cor_separation}) and therefore must belong to $\tilde S$. Therefore, $U_1\subset\tilde{S}$ and $\dim U_1\le d-2$. For each $w\in W_2$, a vector $\theta'=su+w$ can only belong to $L'$ if the equation $\det(h(\theta_1,su+w)-E)=0$ has degenerate roots (as an equation in $\theta_1$; see Proposition \ref{prop_degenerate_root}). By the definition of $W_2$, this can only happen for a discrete set of values of $s\in \R$, and therefore $L'\cap U_2$ has discrete intersection with any line parallel to $u$. This implies $\dim (L'\cap U_2)\le \dim W_2\le d-2$ and completes the proof.\,\qed
\subsection{On some divisible lattices}
First, consider the case $p=2$, and let $V$ be a checkerboard potential. It is easy to see that the operator $h(\theta)$ is now unitarily equivalent to
$$
\begin{pmatrix}
v_0 & \cos(\theta_1)+\ldots+\cos(\theta_d)\\
\cos(\theta_1)+\ldots+\cos(\theta_d)& v_1
\end{pmatrix}.
$$
As a consequence, the internal edges of the spectral bands are defined by the equation $\cos(\theta_1)+\ldots+\cos(\theta_d)=0$ and therefore the corresponding Fermi surfaces have dimensions $d-1$.

Regarding periodic potentials with larger periods, the following result is established in \cite[Remark 4.1]{PS}.
\begin{prop}
\label{prop_ps}
Let $d=2$, $p\ge 3$, the lattice $\Gamma$ be spanned by $\{(1,1),(p,0)\}$ and $v(n_1,n_2)=V_{(n_1+n_2)\,\mathrm{mod}\,p}$ be a real-valued $p$-periodic potential. Assume that $V_0<V_j-2$ for $j=1,2,\ldots,p-1$. Then the right edge of the left-most spectral band of the corresponding discrete Schr\"odinger operator is equal to $V_0$, and the corresponding Fermi surface has dimension one.
\end{prop}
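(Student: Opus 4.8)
The plan is to reduce the $N=p$ dimensional fiber operator $h(\theta)$ to a one–dimensional periodic Jacobi matrix and then to locate the band edge and its level set by an elementary min--max argument. The starting observation is that, since $(1,1)\in\Gamma$, every $\Gamma$-periodic potential satisfies $v(n+(1,1))=v(n)$, so $v$ depends only on the diagonal coordinate $k:=n_1-n_2$ modulo $p$; in particular $v$ is constant along the $(1,1)$-direction and $p$-periodic in $k$ because $(p,0)\in\Gamma$. First I would pass to coordinates adapted to $k$ (equivalently, work directly from \eqref{eq_hhat_def}) and carry out the Floquet reduction. A direct computation with the quasiperiodicity relations then shows that $h(\theta)$ is unitarily equivalent to the $p\times p$ periodic Jacobi matrix on the cycle $\Z/p$ with diagonal entries $V_0,\ldots,V_{p-1}$ and with all nearest–neighbour hoppings of modulus
\[
c(\theta)=|1+e^{2\pi i(\theta_1+\theta_2)}|=2|\cos\pi(\theta_1+\theta_2)|,
\]
up to a single boundary phase that records the flux. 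Only $c(\theta)$ and the diagonal will be used below; the crucial structural feature is that $c(\theta)$ vanishes precisely on the one–dimensional subtorus $\Sigma:=\{\theta\in\T^2:\theta_1+\theta_2=1/2\}$.

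Next I would pin down the right edge $E_1^+$. Using the trial vector $\delta_0$ (the indicator of the site carrying $V_0$) and the fact that $\Delta(\theta)$ has zero diagonal, one gets $\langle h(\theta)\delta_0,\delta_0\rangle=V_0$, so by min--max $E_1(\theta)\le V_0$ for \emph{every} $\theta\in\T^2$. On the set $\Sigma$ the hopping vanishes, so $h(\theta)=\diag(V_0,\ldots,V_{p-1})$ and hence $E_1(\theta)=\min_j V_j=V_0$, where the minimum is attained at $V_0$ because $V_0<V_j$ for all $j\ge 1$ (a consequence of the hypothesis $V_0<V_j-2$). Combining the two facts gives $E_1^+=\max_{\theta}E_1(\theta)=V_0$, which is the asserted value of the right edge of the left-most band.

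It remains to show that the level set $\{\theta:E_1(\theta)=V_0\}$ is exactly $\Sigma$, hence one–dimensional. The inclusion $\Sigma\subset\{E_1=V_0\}$ was just established, so I would prove the reverse by showing $E_1(\theta)<V_0$ whenever $c(\theta)>0$. Compress $h(\theta)$ to $\mathrm{span}\{\delta_0,\delta_1\}$, where $\delta_1$ is a neighbour of $\delta_0$ on the cycle (available since $p\ge 3$); this yields the $2\times2$ Hermitian matrix
\[
\begin{pmatrix} V_0 & w \\ \bar w & V_1\end{pmatrix},\qquad |w|=c(\theta)>0,
\]
whose lowest eigenvalue equals $\tfrac{V_0+V_1}{2}-\sqrt{(\tfrac{V_1-V_0}{2})^2+c(\theta)^2}$, strictly below $V_0$ because $V_1>V_0$ and $c(\theta)>0$. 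By min--max $E_1(\theta)<V_0$, so the level set is precisely $\Sigma$, a circle in $\T^2$, giving a Fermi surface of dimension one. The only genuine obstacle is the careful bookkeeping in the Bloch reduction—tracking the hopping phases and the boundary flux to confirm the modulus is $2|\cos\pi(\theta_1+\theta_2)|$—while the conceptual heart of the argument is the observation that the inter-site coupling degenerates on a codimension-one set, exactly the degeneracy that divisibility of $\Gamma$ permits and that Theorem~\ref{th_main_edges} excludes for non-divisible lattices. (Note that the full strength $V_0<V_j-2$ is stronger than what the above uses, namely $V_0<V_j$; the surplus serves to place $V_0$ well below the other diagonal values.)
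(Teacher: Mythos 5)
Your argument is correct, but note first that the paper contains no internal proof of this proposition to compare against: it is quoted as established in \cite[Remark 4.1]{PS}. Your proof is, in substance, the same elementary mechanism as the ``simple argument'' of \cite{PS}: reduce along the direction in which the potential is constant and observe that the effective nearest-neighbour coupling degenerates on a codimension-one set of quasimomenta. Where \cite{PS} organizes this as a family of one-dimensional $p$-periodic operators with hopping amplitude $2\cos(\cdot)$ fibered over the transverse momentum, you write the full $p\times p$ Floquet matrix as a cycle over $\Z/p$ with diagonal $V_0,\ldots,V_{p-1}$, hopping modulus $c(\theta)=|1+e^{2\pi i(\theta_1+\theta_2)}|=2|\cos\pi(\theta_1+\theta_2)|$, and one boundary flux phase; I have checked this reduction (the neighbours $(k,\pm 1)$ of $(k,0)$ reduce to $(k\mp 1,0)$ via $\pm(1,1)\in\Gamma$, producing the factors $1+e^{\mp 2\pi i(\theta_1+\theta_2)}$), and your three steps --- the Rayleigh quotient with $\delta_0$ giving $E_1(\theta)\le V_0$ everywhere, diagonality of $h(\theta)$ on $\Sigma=\{\theta_1+\theta_2\equiv 1/2\}$ giving equality there, and the $2\times 2$ compression onto $\mathrm{span}\{\delta_0,\delta_1\}$ (valid since $p\ge 3$, and none of the arguments use the flux phase, only moduli) giving strict inequality $E_1(\theta)<V_0$ wherever $c(\theta)>0$ --- are all sound applications of min--max. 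The conclusion that the level set $\{E_1=E_1^+\}$ is exactly the circle $\Sigma$ follows.

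Two points deserve explicit flagging. First, as printed the proposition contains a typo that you corrected silently: for $\Gamma=\mathrm{span}\{(1,1),(p,0)\}$ with $p\ge 3$, the potential $V_{(n_1+n_2)\,\mathrm{mod}\,p}$ is \emph{not} $\Gamma$-periodic, since translation by $(1,1)$ shifts the index by $2$; the consistent pairing is the index $n_1-n_2$ with this lattice (note $(1/p,-1/p)\in\Gamma'$), or equivalently the index $n_1+n_2$ with the reflected lattice $\mathrm{span}\{(1,-1),(p,0)\}$. You adopted the first reading when you wrote that $v$ depends only on $k=n_1-n_2$, which is the right fix, but you should say that you are correcting the statement rather than asserting this of the printed potential. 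Second, your closing parenthesis is accurate: only $V_0<V_j$ enters your proof of the stated claim (the value of $E_1^+$ and its level set); the surplus in $V_0<V_j-2$ is presumably there, in the setting of \cite{PS}, to keep the lowest band isolated so that $V_0$ is a genuine edge of a spectral gap. Since Theorem \ref{th_main_edges} concerns the level sets $\{\theta\colon E_j(\theta)=E_j^{\pm}\}$ irrespective of band overlaps, the weaker hypothesis already produces the desired counterexample for divisible lattices in $d=2$.
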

As a consequence, for any divisible lattice $\Gamma\subset \Z^2$, there exists a $\Gamma$-periodic potential such that the conclusion of Theorem \ref{th_main_edges} does not hold.

\end{document}